\begin{document}


\begin{frontmatter}

\titledata{On two graph isomorphism problems}{}           

\authordata{John Baptist Gauci}
{Department of Mathematics, University of Malta, Malta}
{john-baptist.gauci@um.edu.mt}
{}

\authordata{Jean Paul Zerafa}
{Department of Technology and Entrepreneurship Education\\University of Malta, Malta; \\
Department of Computer Science, Comenius University\\ 842 48 Bratislava, Slovakia}
{zerafa.jp@gmail.com}
{}

\keywords{Isomorphism, quartic graph, circulant graph.}
\msc{05C60.}

\begin{abstract}
In 2015, Bogdanowicz gave a necessary and sufficient condition for a 4-regular circulant graph to be isomorphic to the Cartesian product of two cycles. Accordion graphs, denoted by $A[n,k]$, are 4-regular graphs on two parameters $n$ and $k$ which were recently introduced by the authors and studied with regards to Hamiltonicity and matchings. These graphs can be obtained by a slight modification in some of the edges of the Cartesian product of two cycles. Motivated by the work of Bogdanowicz, the authors also determined for which values of $n$ and $k$ the accordion graph $A[n,k]$ is circulant. In this work we investigate what parameters a 4-regular circulant graph must have in order to be isomorphic to an accordion graph, thus providing a complete characterisation similar to that given by Bogdanowicz. We also give a necessary and sufficient condition for two accordion graphs with distinct parameters to be isomorphic.
\end{abstract}

\end{frontmatter}

\section{Introduction}

Graph isomorphism problems are considered to be notoriously hard and one of the most prominent and still unsolved problems in computer science and mathematics is the so-known ``Graph Isomorphism Problem", that is, the computational problem of determining in polynomial time whether two finite graphs are isomorphic. In particular, Aurenhammer \emph{et al.} \cite{ahi1992} proved in 1992 that there is an efficient polynomial time algorithm to resolve the isomorphism problem for the Cartesian product of graphs. Some time later, in 2004, Muzychuk \cite{muzychuk} showed that determining whether two circulant graphs are isomorphic or not can be solved in polynomial time.

Let $G$ be a simple graph with vertex set $V(G)$ and edge set $E(G)$. The graph $G$ is said to be \emph{$4$-regular} or \emph{quartic} if every vertex in $G$ has exactly four edges incident to it. The \emph{Cartesian product} $G\square H$ of two graphs $G$ and $H$ is a graph whose vertex set is the Cartesian product $V(G) \times V(H)$ of $V(G)$ and $V(H)$. Two vertices $(w_i,z_j)$ and $(w_k,z_{\ell})$ are adjacent precisely if  $w_i=w_k$ and $z_jz_{\ell}\in E(H)$ or $w_iw_k \in E(G)$ and $z_j=z_{\ell}$. The graph $G\square H$ can be partitioned into $|V(H)|$ disjoint copies of $G$ or into $|V(G)|$ disjoint copies of $H$. We shall refer to each such copy of $G$ or $H$ as a {\em canonical} copy, particularly to differentiate from other possible copies of $G$ or $H$ that $G\square H$ might contain.

For distinct positive integers $a$ and $b$, $\textrm{Ci}[2n,\{a,b\}]$ denotes the \emph{quartic circulant graph} on the vertices $\{x_{i}:i\in[2n]\}$, such that $x_i$ is adjacent to the vertices in the set $\{x_{i+a},x_{i-a}, \linebreak x_{i+b}, x_{i-b}\}$. Here and henceforth, operations (including addition and subtraction) in the indices of the vertices $x_{i}$ of $\textrm{Ci}[2n,\{a,b\}]$ are taken modulo $2n$, with complete residue system $\{1,\ldots, 2n\}$. We say that the edges arising from these adjacencies have length $a$ and $b$, accordingly. Furthermore, since operations in the indices of the vertices $x_{i}$ are taken modulo $2n$, we can further assume that both $a$ and $b$ are strictly less than $n$. When a 4-regular graph $G$ is isomorphic to a quartic circulant graph we shall say that $G$ is \emph{circulant}. For $t\geq 3$, a \emph{cycle} of length $t$ (or a $t$-cycle), denoted by $C_{t}=(v_{1}, \ldots, v_{t})$, is a sequence of distinct vertices $v_1,v_2,\ldots,v_{t}$ with corresponding edge set $\{v_{1}v_{2}, \ldots, v_{t-1}v_{t}, v_{t}v_{1}\}$. For definitions not explicitly stated here we refer the reader to \cite{Diestel}.

Accordion graphs (introduced in \cite{accordionspmh}) can be obtained from the Cartesian product of two cycles as follows. Consider, for example, $C_{4}\square C_{5}$, and let $(\ell_{1}, \ell_{2}, \ell_{3},\ell_{4})$ and $(r_{1},r_{2},r_{3},r_{4})$ be two canonical $4$-cycles of $C_{4}\square C_{5}$ such that $\ell_{i}$ is adjacent to $r_{i}$, for every $i\in[4]$. The resulting graph after deleting the edges $\{\ell_{i}r_{i}:i\in[4]\}$ from $C_{4}\square C_{5}$ is isomorphic to $C_{4}\square P_{5}$, where $P_{5}$ is the path on five vertices of length $4$. If we add the edges $\{r_{1}\ell_{3},r_{2}\ell_{4},r_{3}\ell_{1},r_{4}\ell_{2},\}$ to $C_{4}\square P_{5}$, the resulting graph is, in fact, the accordion graph $A[10,5]$ (depicted in Figure \ref{Figure AccCart}) as defined in the following definition.

\begin{figure}[h]
\centering
\includegraphics[width=.59\textwidth]{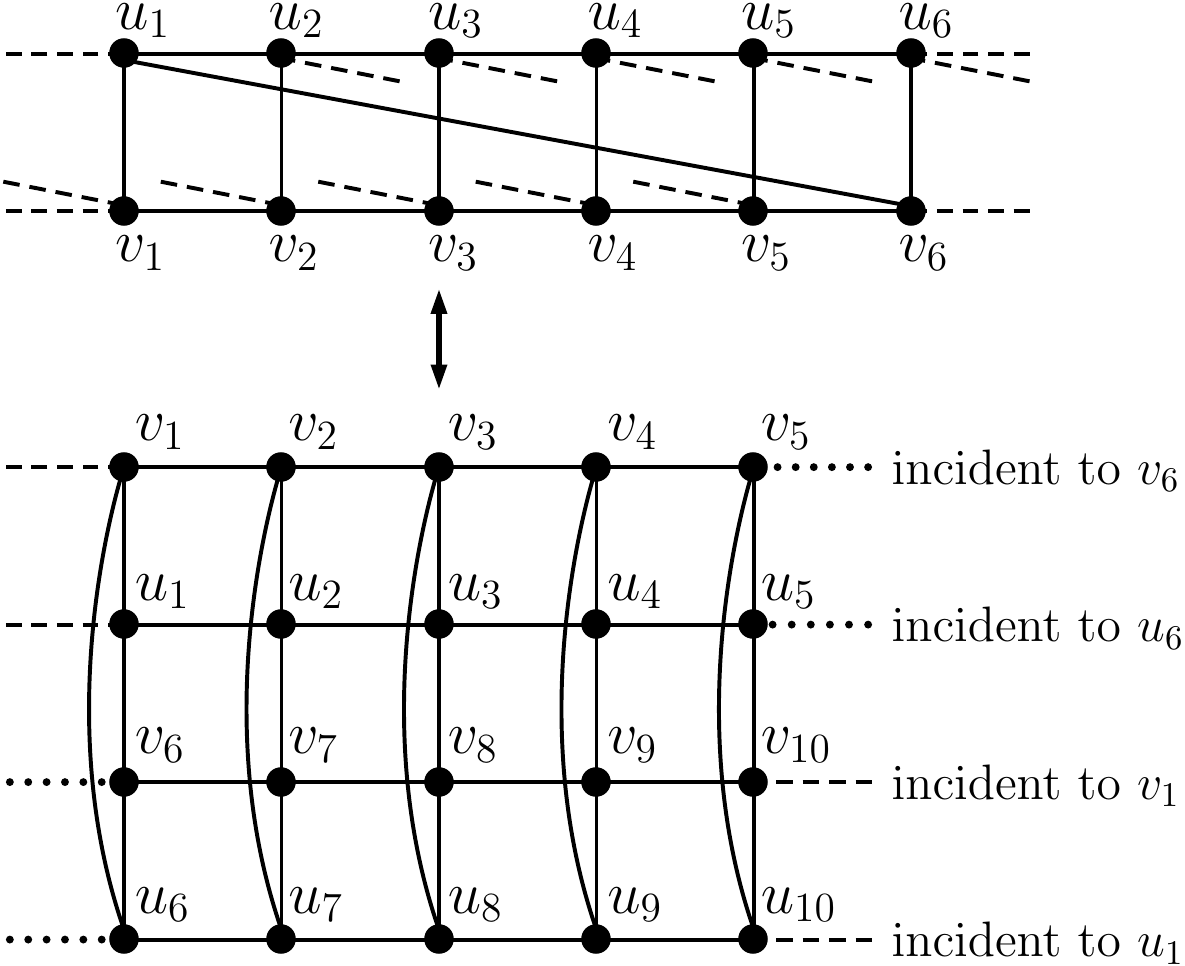}
\caption{Two different drawings of $A[10,5]$}
\label{Figure AccCart}
\end{figure}

\begin{definition} \label{def AccordionGraphs}
Let $n$ and $k$ be integers such that  $n\geq 3$ and $1\leq k\le\frac{n}{2}$. The \emph{accordion graph} $A[n,k]$  is the quartic graph with vertices $\{u_1, u_2,\ldots , u_{n}, v_1, v_2,\ldots , v_{n}\}$ such that:
\begin{enumerate}[(i)]
\item $(u_{1},u_{2},\ldots, u_{n})$ and $(v_{1},v_{2},\ldots, v_{n})$ are two $n$-cycles;
\item $u_{i}$ is adjacent to $v_{i}$, for every $i\in[n]$; and
\item $u_{i}$ is adjacent to $v_{i+k}$, for every $i\in[n]$, where addition in the subscripts is taken modulo $n$, with complete residue system $\{1,\ldots, n\}$.
\end{enumerate} 
The edges $u_{i}u_{i+1}$ and $v_{i}v_{i+1}$ are called the \emph{outer-cycle edges} and the \emph{inner-cycle edges}, respectively, or simply the \emph{cycle edges}, collectively; and the edges $u_iv_i$ and $u_iv_{i+k}$ are called the \emph{vertical spokes} and the \emph{diagonal spokes}, respectively, or simply the \emph{spokes}, collectively. We denote the set $\{u_{1},u_{2},\ldots, u_{n}\}$ of vertices on the outer-cycle of $A[n,k]$ by $\mathcal{U}$, and the set $\{v_{1},v_{2},\ldots, v_{n}\}$ of vertices on the inner-cycle of $A[n,k]$ by $\mathcal{V}$. We remark that, here and henceforth, operations (including addition and subtraction) in the indices of the vertices $u_{i}$ and $v_{i}$ of $A[n,k]$ are taken modulo $n$, with complete residue system $\{1,\ldots, n\}$.
\end{definition}

As can be seen in the following remark, the greatest common divisor of $n$ and $k$, denoted by $\gcd(n,k)$, will prove to be useful in the study of accordion graphs.

\begin{remark}\label{RemarkDrawing} 
The graph obtained from $A[n,k]$ after deleting the edges 
\begin{linenomath}
$$\{u_{tn_{2}}u_{tn_{2}+1}, v_{tn_{2}}v_{tn_{2}+1} : n_{2}=\gcd(n,k) \textrm{ and } t\in\{1,\ldots,\tfrac{n}{n_{2}}\}\}$$ 
\end{linenomath}
is isomorphic to the Cartesian product $C_{n_{1}} \square P_{n_{2}}$, where $n_{1}n_{2}=2n$. On the other hand, consider $C_{n_{1}}\square P_{n_{2}}$, for some even $n_{1}\geq 4$, and for some integer $n_{2}$. Note that $n_{1}$ was taken to be even because accordion graphs are of even order. Assume first that $n_{2}>1$. Let $(\ell_{1}, \ell_{2}, \ldots, \ell_{n_{1}})$ and $(r_{1},r_{2},\ldots,r_{n_{1}})$ be the two canonical $n_{1}$-cycles of $C_{n_{1}}\square P_{n_{2}}$ such that for every $i\in[n_{1}]$, $\ell_{i}$ and $r_{i}$ have degree $3$ and are endvertices of the $i$\textsuperscript{th} canonical copy of $P_{n_{2}}$ in $C_{n_{1}}\square P_{n_{2}}$. Depending on $n_{1}$, there could be various distinct accordion graphs $A[n,k]$ which can be obtained from $C_{n_{1}}\square P_{n_{2}}$ by adding edges to the vertices having degree $3$. 
For such an accordion graph $A[n,k]$, $n$ is equal to $\frac{n_{1}n_{2}}{2}$. Moreover, $k$ can be any integer in $\{1,\ldots,\frac{n}{2}\}$ such that $\gcd(n,k)=n_{2}$. Let $k$ be such an integer. The set consisting of the edges that need to be added to $C_{n_{1}}\square P_{n_{2}}$ such that the resulting graph is isomorphic to $A[n,k]$ is $\{r_{i}\ell_{i+2\gamma\pmod{n_{1}}}:i\in[n_{1}]\}$, where $\gamma$ is the least positive integer such that $\gamma k\equiv n_{2}\pmod{n}$. When $n_{2}=1$, $C_{n_{1}}\square P_{n_{2}}$ consists of the (Hamiltonian) cycle $(w_{1},w_{2}, \ldots, w_{n_{1}})$, and in order to obtain an accordion graph $A[n,k]$ by adding edges to $C_{n_{1}}\square P_{1}$, $n$ must be equal to $\frac{n_{1}}{2}$ and $k$ can be any integer in $\{1,\ldots,\frac{n}{2}\}$ such that $\gcd(n,k)=1$, that is, $n$ and $k$ are coprime. Let $k$ be such an integer. The set consisting of the edges that need to be added to $C_{n_{1}}\square P_{1}$ such that the resulting graph is isomorphic to $A[n,k]$ is $\{w_{i}w_{i+2\gamma\pmod{n_{1}}}:i\in[n_{1}]\}$, where $\gamma$ is the least positive integer such that $\gamma k\equiv 1\pmod{n}$.
\end{remark}

In Section \ref{section isomorphic accordions} we consider isomorphisms in the class of accordion graphs and we give a complete solution to the problem of determining whether two accordion graphs are isomorphic or not. Clearly, if $A[n,k_{1}]$ and $A[n',k_{2}]$ are isomorphic, $n$ must be equal to $n'$, since $|V(A[n,k_{1}])|=|V(A[n',k_{2}])|$. Consequently, in Theorem \ref{theorem iso accordions}, we give the conditions on $k_{1}$ and $k_{2}$ for which the two accordion graphs $A[n,k_{1}]$ and $A[n,k_{2}]$ are isomorphic.
In Section \ref{section circulant accordion graphs}, we then tackle and completely solve the problem of when a quartic circulant graph is isomorphic to an accordion graph. This was motivated by the work of Bogdanowicz \cite{bogdanowicz} where he proved the following theorem dealing with quartic circulant graphs and the Cartesian product of two cycles.

\begin{theorem}\cite{bogdanowicz}
The quartic circulant graph $\textrm{Ci}[n',\{a_{1},a_{2}\}]$ is isomorphic to $C_{n_{1}}\square C_{n_{2}}$ if and only if the following two conditions hold:
\begin{enumerate}[(i)]
\item $n'=n_1n_2$;
\item $n_1=\gcd(n',a_j)$ and $n_2=\gcd(n',a_{3-j})$, where $j=1$ or $j=2$; and
\item $\gcd(n_1,n_2)=1$.
\end{enumerate}
\end{theorem}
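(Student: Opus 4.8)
The plan is to treat the two directions separately. The forward direction is a short application of the Chinese Remainder Theorem; the converse, which carries the real content, I would handle by passing to automorphism groups.

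\emph{Sufficiency of (i)--(iii).} Assume the three conditions (in particular $n_1,n_2\geq 3$). Since $\gcd(n_1,n_2)=1$, the two greatest common divisors appearing in (ii) are distinct, so after possibly interchanging $a_1$ and $a_2$ (which does not change the graph) we may assume $\gcd(n',a_1)=n_2$ and $\gcd(n',a_2)=n_1$. Identify $V(C_{n_1}\square C_{n_2})$ with the pairs $(i,j)$, $i\in[n_1]$, $j\in[n_2]$, with arithmetic in the two coordinates taken modulo $n_1$, resp.\ $n_2$, and identify $V(\textrm{Ci}[n',\{a_1,a_2\}])$ with $[n']$ (arithmetic modulo $n'$). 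Modulo $n'$ the element $a_1$ has order $n'/\gcd(n',a_1)=n_1$ and $a_2$ has order $n_2$; as $\gcd(n_1,n_2)=1$, the cyclic subgroups generated by $a_1$ and by $a_2$ meet only in $0$ and together generate everything, so $f(i,j):=ia_1+ja_2$ is a group isomorphism of $[n_1]\times[n_2]$ onto $[n']$. For vertices $u,v$ of $C_{n_1}\square C_{n_2}$ one has $u-v\in\{(\pm1,0),(0,\pm1)\}$ if and only if $f(u)-f(v)=f(u-v)\in\{\pm a_1,\pm a_2\}$; since the left condition characterises adjacency in $C_{n_1}\square C_{n_2}$ and the right one adjacency in $\textrm{Ci}[n',\{a_1,a_2\}]$, $f$ is a graph isomorphism.

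\emph{Necessity of (i)--(iii).} Condition (i) is immediate by counting vertices. For (ii) and (iii) I would use the structure of $\mathcal{A}:=\textrm{Aut}(C_{n_1}\square C_{n_2})$: by the classical description of automorphisms of Cartesian products (Sabidussi), together with the facts that $C_m$ is prime with respect to $\square$ unless $m=4$ (where $C_4=K_2\square K_2$), that $\textrm{Aut}(C_m)\cong D_m$, and that $\textrm{Aut}(K_2)\wr S_2\cong D_4$, one has $\mathcal{A}\cong D_{n_1}\times D_{n_2}$ if $n_1\neq n_2$, $\mathcal{A}\cong D_n\wr S_2$ if $n_1=n_2=n\neq 4$, and $\mathcal{A}\cong S_2\wr S_4$ if $n_1=n_2=4$. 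If $\gcd(n_1,n_2)>1$ then the largest order of an element of $\mathcal{A}$ is strictly less than $n_1n_2$: in $D_{n_1}\times D_{n_2}$ every element order divides $\lcm(n_1,n_2,2)$, which equals $n_1n_2/2$ when $\gcd(n_1,n_2)$ is even and is at most $2n_1n_2/\gcd(n_1,n_2)\leq\tfrac{2}{3}n_1n_2$ when $\gcd(n_1,n_2)$ is odd; in $D_n\wr S_2$ it is at most $2n<n^2$; and in $S_2\wr S_4$ it is $8<16$. Now if $\textrm{Ci}[n',\{a_1,a_2\}]\cong C_{n_1}\square C_{n_2}$ then $\mathcal{A}$ contains a regular cyclic subgroup of order $n'=n_1n_2$, hence an element of that order; by the above, $\gcd(n_1,n_2)=1$, which is (iii).

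Assume now $\gcd(n_1,n_2)=1$, so $n_1\neq n_2$ and $\mathcal{A}\cong D_{n_1}\times D_{n_2}$. A further element‑order check shows that every element of order $n_1n_2$ in $\mathcal{A}$ is a pair of rotations, one of order $n_1$ in the first factor and one of order $n_2$ in the second; with $V(C_{n_1}\square C_{n_2})$ written as $[n_1]\times[n_2]$ such an element is the translation $(i,j)\mapsto(i+c_1,j+c_2)$ with $\gcd(c_1,n_1)=\gcd(c_2,n_2)=1$, so the translation group is the \emph{unique} regular cyclic subgroup of order $n'$. Fix an isomorphism $\psi\colon\textrm{Ci}[n',\{a_1,a_2\}]\to C_{n_1}\square C_{n_2}$. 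Conjugating the rotation $x\mapsto x+1$ of the circulant by $\psi$ yields a generator of such a subgroup, i.e.\ translation by some $(c_1,c_2)$ that generates $[n_1]\times[n_2]$, whence conjugating $x\mapsto x+a_\ell$ yields translation by $a_\ell(c_1,c_2)$. Evaluating at one vertex gives the equality of $4$‑element sets $\{\pm a_1(c_1,c_2),\pm a_2(c_1,c_2)\}=\{(\pm1,0),(0,\pm1)\}$ in $[n_1]\times[n_2]$. Say $a_1(c_1,c_2)\in\{(\pm1,0)\}$ (the other case is symmetric); then $a_2(c_1,c_2)\in\{(0,\pm1)\}$. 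The second coordinate of $a_1(c_1,c_2)$ vanishing together with $\gcd(c_2,n_2)=1$ forces $n_2\mid a_1$, while the first coordinate being a unit forces $\gcd(a_1,n_1)=1$; since $\gcd(n_1,n_2)=1$ this gives $\gcd(n',a_1)=n_2$, and the same argument applied to $a_2$ gives $\gcd(n',a_2)=n_1$. Thus $\{\gcd(n',a_1),\gcd(n',a_2)\}=\{n_1,n_2\}$, which is (ii).

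I expect the main obstacle to be the determination of $\textrm{Aut}(C_{n_1}\square C_{n_2})$ used in the converse — in particular correctly accommodating the one genuinely exceptional Cartesian product of cycles, $C_4\square C_4$, and the extra wreath factor when the two cycles are equal; the forward direction and the final deductions are routine once this is in place. A self‑contained variant would avoid Sabidussi's theorem and instead match isomorphism invariants directly — the girth, the number of triangles (these being exactly the $C_3$‑layers when $3\in\{n_1,n_2\}$), and the number of $4$‑cycles along with how they distribute over the two monochromatic $2$‑factors of a $2$‑factorisation (for $n_1,n_2\geq 5$ every $4$‑cycle is a ``unit square'', which forces that $2$‑factorisation to be the canonical one) — but this pays for the saved group theory with a longer case analysis whose delicate case is $4\in\{n_1,n_2\}$.
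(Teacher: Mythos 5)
This statement is quoted from Bogdanowicz's paper \cite{bogdanowicz} and is not proved anywhere in the present manuscript, so there is no in-paper argument to measure you against; I can only assess your proposal on its own terms and against the style of the analogous results that the paper does prove. Your proof is correct. The sufficiency direction (the internal direct sum $\langle a_1\rangle\oplus\langle a_2\rangle=\mathbb{Z}_{n'}$ giving the explicit isomorphism $f(i,j)=ia_1+ja_2$) is clean and complete, and the necessity direction is sound: the circulant supplies a regular cyclic subgroup of order $n_1n_2$ in $\mathrm{Aut}(C_{n_1}\square C_{n_2})$, the element-order bounds in $D_{n_1}\times D_{n_2}$, $D_n\wr S_2$ and $S_2\wr S_4$ rule out $\gcd(n_1,n_2)>1$, and once the translation group is identified as the unique regular cyclic subgroup, conjugating $x\mapsto x+a_\ell$ and reading off the neighbour set $\{(\pm1,0),(0,\pm1)\}$ correctly yields $\{\gcd(n',a_1),\gcd(n',a_2)\}=\{n_1,n_2\}$. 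You were also right to flag $C_4=K_2\square K_2$ as the one place where the prime factorisation of a cycle degenerates, and your case analysis covers it (including the mixed case where exactly one $n_i$ equals $4$, where the group is still $D_{n_1}\times D_{n_2}$). The only caveat is that the argument leans on Sabidussi's description of automorphism groups of Cartesian products, which is a heavier imported tool than anything used elsewhere in this paper: the authors' own analogous characterisations (Theorems \ref{theorem bip iso} and \ref{theorem nonbip iso}) proceed by elementary combinatorial analysis of edge lengths and of $4$-cycles (the ``forbidden configuration'' argument), which is closer in spirit to Bogdanowicz's original elementary proof; your group-theoretic route buys a shorter and more conceptual necessity argument at the cost of citing the structure theory of $\square$-prime factorisations.
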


Here, we obtain an analogous result for accordion graphs (see Section \ref{section circulant accordion graphs}). This characterisation shall be done in two steps: the bipartite case (Theorem \ref{theorem bip iso}) and the non-bipartite case (Theorem \ref{theorem nonbip iso}), where we give the necessary and sufficient conditions for a quartic circulant graph $\textrm{Ci}[2n,\{a,b\}]$ to be isomorphic to the accordion graph $A[n,k]$.

\section{Main results}

Before proceeding, we give some results from \cite{accordionspmh} which shall be used to prove our main results.

\begin{lemma}[\cite{accordionspmh}]\label{lemma bipartite}
The accordion graph $A[n,k]$ is bipartite if and only if both $n$ and $k$ are even.
\end{lemma}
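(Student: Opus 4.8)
The plan is to show both directions by analysing the cycle structure of $A[n,k]$. Recall that a graph is bipartite if and only if it contains no odd cycle, equivalently if and only if every cycle has even length. For the easier direction, suppose both $n$ and $k$ are even. I would exhibit an explicit $2$-colouring: assign to each vertex $u_i$ and $v_i$ a colour determined by the parity of its index, say colour $u_i$ and $v_i$ by $i \bmod 2$. One then checks that every edge type joins vertices of opposite colours: the cycle edges $u_iu_{i+1}$ and $v_iv_{i+1}$ clearly flip parity; the vertical spokes $u_iv_i$ — here I would instead offset the inner cycle, colouring $v_i$ by $(i+1)\bmod 2$, so that $u_iv_i$ is bichromatic; and then the diagonal spokes $u_iv_{i+k}$ join colour $i\bmod 2$ to colour $(i+k+1)\bmod 2$, which differ precisely because $k$ is even. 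The only subtlety is that the colouring must be well defined modulo $n$, which is where the hypothesis that $n$ is even is used (so that $i \bmod 2$ is consistent with $i \equiv i+n$).

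For the converse, I would prove the contrapositive: if $n$ is odd or $k$ is odd, then $A[n,k]$ contains an odd cycle. If $n$ is odd, the outer $n$-cycle $(u_1,\dots,u_n)$ is itself an odd cycle, so we are done. The remaining case is $n$ even and $k$ odd; here I need to build an odd cycle explicitly. The natural candidate is a short cycle using two diagonal spokes and some cycle edges: for instance, $u_i v_{i+k}$, then walk along the inner cycle from $v_{i+k}$ back to $v_{i+1}$ (using $k-1$ inner-cycle edges, going backwards), then the diagonal spoke $v_{i+1}u_{\,?}$ — one must be careful about the orientation of the diagonal spokes, since $u_jv_{j+k}$ means $v_m$ is joined to $u_{m-k}$. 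So the cycle $u_i \,,\, v_{i+k} \,,\, v_{i+k-1}\,,\,\dots\,,\,v_{i+1}\,,\,u_{i+1-k}\,,\,u_{i+2-k}\,,\,\dots\,,\,u_i$ closes up, using one diagonal spoke down, $(k-1)$ inner-cycle edges, one diagonal spoke up, and $(k-1)$ outer-cycle edges — a cycle of length $2k$, which is even, so that does not work. Instead I would use the vertical spokes: take $u_i$, the vertical spoke to $v_i$, then backwards along the inner cycle $v_i, v_{i-1}, \dots, v_{i-k+1}$? — no; the clean choice is $u_i$, diagonal spoke to $v_{i+k}$, vertical spoke $v_{i+k}$ to $u_{i+k}$, then outer-cycle edges from $u_{i+k}$ back to $u_i$, a total of $1 + 1 + k = k+2$ edges, which is odd exactly when $k$ is odd. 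This is the odd cycle we want, and it is a genuine cycle provided $k \le n/2$ so that the listed vertices are distinct, which holds by Definition \ref{def AccordionGraphs}.

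The main obstacle I anticipate is purely bookkeeping: getting the orientation of the diagonal spokes right (the edge set is $\{u_iv_{i+k}\}$, so from a $v$-vertex the spoke goes "down by $k$" in index), and verifying that the vertices in the proposed odd cycle are pairwise distinct — the bound $k \le n/2$ guarantees that $u_i, u_{i+1}, \dots, u_{i+k}$ together with $v_{i+k}$ are $k+2 \le n/2 + 2$ distinct vertices, but one should double-check the small cases. It may be cleanest to present the converse as: the cycle $(u_1, u_2, \dots, u_{k+1}, v_{k+1}, u_1)$ — wait, $v_{k+1}$ is adjacent to $u_1$ via the diagonal spoke $u_1 v_{1+k}$, and to $u_{k+1}$ via the vertical spoke, and to $u_{k+2}$? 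No: so the cycle is $(u_1, u_2, \dots, u_{k+1}, v_{k+1})$ returning to $u_1$, with edges $u_1u_2, \dots, u_ku_{k+1}$ ($k$ edges), $u_{k+1}v_{k+1}$ (vertical spoke), $v_{k+1}u_1$ (diagonal spoke $u_1v_{1+k}$): total $k+2$ edges, odd iff $k$ odd. Combined with the outer $n$-cycle handling the odd-$n$ case, this completes the contrapositive and hence the lemma.
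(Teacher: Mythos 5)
Your argument is correct, but note that the paper itself gives no proof of this lemma: it is imported verbatim from \cite{accordionspmh}, so there is nothing in this manuscript to compare against. On its own merits, your proof is sound and self-contained. The sufficiency direction via the $2$-colouring $u_i\mapsto i\bmod 2$, $v_i\mapsto (i+1)\bmod 2$ checks out on all four edge types (cycle edges and vertical spokes flip parity automatically; diagonal spokes $u_iv_{i+k}$ flip parity exactly because $k$ is even; consistency under the wrap-around $i\equiv i+n$ uses that $n$ is even). For necessity, the odd outer cycle handles $n$ odd, and for $n$ even, $k$ odd, the cycle $(u_1,u_2,\ldots,u_{k+1},v_{k+1})$ of length $k+2$ is a genuine odd cycle: its vertices are distinct since $k+1\le \frac{n}{2}+1\le n$, and the closing edges $u_{k+1}v_{k+1}$ (vertical spoke) and $v_{k+1}u_1$ (the diagonal spoke $u_1v_{1+k}$) are both present, including in the boundary case $k=1$ where it degenerates to the triangle $(u_1,u_2,v_2)$. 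If you were to write this up, strip out the two abandoned attempts (the $2k$-cycle and the mid-sentence corrections); only the final construction is needed.
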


\begin{theorem}[\cite{accordionspmh}]\label{theorem accordion char circ}
The accordion graph $A[n,k]$ is circulant if and only if exactly one of the following occurs:
\begin{enumerate}[(i)]
\item $k$ is odd; or
\item $k$ is even and $n$ is odd; or
\item $k=2$ and $n$ is even.
\end{enumerate}
\end{theorem}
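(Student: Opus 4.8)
The plan is to prove the two implications separately, using throughout the standard facts that a graph on $2m$ vertices is circulant iff its automorphism group has a cyclic regular subgroup of order $2m$, and that a bipartite quartic circulant on $2n$ vertices must be $\textrm{Ci}[2n,\{a,b\}]$ with $a,b$ both odd.

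(\emph{Sufficiency}.) If $k$ is odd, I would show that for a suitable $c\in\mathbb{Z}_{n}$ the permutation $\rho$ with $\rho(u_{i})=v_{i+c}$, $\rho(v_{i})=u_{i+c-k}$ (subscripts mod $n$) is an automorphism of $A[n,k]$ — a direct check shows it carries cycle edges to cycle edges and interchanges vertical with diagonal spokes — which interchanges $\mathcal U$ and $\mathcal V$ and satisfies $\rho^{2}(u_{i})=u_{i+2c-k}$; choosing $c$ with $2c\equiv k+1\pmod n$ (solvable since $k+1$ is even) makes $\rho^{2}$ an $n$-cycle on $\mathcal U$, so $\langle\rho\rangle$ is cyclic, regular and of order $2n$, and $A[n,k]$ is circulant. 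If $k$ is even and $n$ is odd, then $n-k$ is odd and $u_{i}\mapsto u_{-i}$, $v_{i}\mapsto v_{-i}$ is an isomorphism $A[n,k]\to A[n,n-k]$, reducing to the previous case. If $k=2$ and $n$ is even, I would exhibit an explicit bijection $V(A[n,2])\to\mathbb{Z}_{2n}$ realising $A[n,2]$ as $\textrm{Ci}[2n,\{1,n-1\}]$; unlike the first two cases this bijection must interchange cycle edges with spokes, so it has to be written down by hand, and I expect it to be the most delicate part of this direction.

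(\emph{Necessity}.) Assume none of (i)--(iii) holds, i.e. $n$ and $k$ are even and $k\neq 2$ (so $4\le k\le\tfrac n2$), and suppose for contradiction that $A[n,k]$ is circulant. By Lemma~\ref{lemma bipartite} it is bipartite, hence $A[n,k]\cong\textrm{Ci}[2n,\{a,b\}]$ with $1\le a<b<n$ both odd, and I would derive a contradiction from an invariant built out of $4$-cycles. Merely counting $4$-cycles is not enough (a generic $\textrm{Ci}[2n,\{a,b\}]$ has exactly $2n$ of them, as does $A[n,k]$ when $4\le k<\tfrac n2$), so I would use the following finer invariant, available whenever every edge lies in exactly two $4$-cycles: the $2$-regular graph $\Gamma(G)$ on $E(G)$ in which $e\sim e'$ precisely when $e,e'$ are the opposite pair of some common $4$-cycle. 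For $4\le k<\tfrac n2$ a bipartite common-neighbour count shows the only same-part pairs of $A[n,k]$ with two common neighbours are $\{u_{i},v_{i\pm1}\}$ and $\{u_{i},v_{i\pm1+k}\}$, each with exactly two, so its $4$-cycles are exactly $u_{i}u_{i+1}v_{i+1}v_{i}$ and $u_{i}u_{i+1}v_{i+1+k}v_{i+k}$, every edge lies in exactly two, and $\Gamma(A[n,k])$ consists of two $n$-cycles (on the vertical, resp. diagonal, spokes) together with $\gcd(n,k)$ cycles of length $2n/\gcd(n,k)$; in particular $\Gamma(A[n,k])$ has an $n$-cycle. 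But in $\textrm{Ci}[2n,\{a,b\}]$ opposite edges of a $4$-cycle have equal length, so $\Gamma$ splits into $\gcd(n,b)$ cycles of length $2n/\gcd(n,b)$ and $\gcd(n,a)$ cycles of length $2n/\gcd(n,a)$, and since $\gcd(n,a),\gcd(n,b)$ are odd while $n$ is even, neither of these lengths is $n$ — contradiction. The case $k=\tfrac n2$ (forcing $n\equiv0\pmod4$) needs a short separate argument: there the cycle edges lie in two $4$-cycles each and the spokes in three each (adding the $\tfrac n2$ four-cycles $u_{i}v_{i}u_{i+n/2}v_{i+n/2}$), so the edge-to-$4$-cycle-count profile of $A[n,k]$ is ``$2$ on $2n$ edges, $3$ on $2n$ edges''; matching this against $\textrm{Ci}[2n,\{a,b\}]$ (all length-$a$ edges in $c_{a}$ four-cycles, all length-$b$ edges in $c_{b}$) forces $\{c_{a},c_{b}\}=\{2,3\}$, whence the extra $4$-cycle through a length-$b$ edge avoids all length-$a$ edges, so uses only length-$b$ edges, giving $4b\equiv0\pmod{2n}$, i.e. $b=\tfrac n2$, impossible for $b$ odd.

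The two places I expect real work are: producing the explicit circulant labelling in case (iii); and, in the necessity proof, verifying the $4$-cycle bookkeeping for $A[n,k]$ (that every edge lies in exactly two $4$-cycles when $4\le k<\tfrac n2$, and the claimed decomposition of $\Gamma(A[n,k])$) — everything else is routine.
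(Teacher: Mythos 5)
First, a caveat about the comparison you asked for: Theorem~\ref{theorem accordion char circ} is stated here with a citation to \cite{accordionspmh} and is \emph{not} proved in this paper, so there is no in-paper proof to measure your argument against; what follows assesses your proposal on its own terms. Your sufficiency argument for cases (i) and (ii) is correct: the map $\rho(u_i)=v_{i+c}$, $\rho(v_i)=u_{i+c-k}$ is an automorphism for every $c$, choosing $2c\equiv k+1\pmod n$ is possible exactly when $k$ is odd or $n$ is odd, and then $\langle\rho\rangle$ is cyclic of order $2n$ acting regularly; the reflection $A[n,k]\cong A[n,n-k]$ legitimately reduces (ii) to (i). Case (iii), however, is only promised, not proved: the isomorphism $A[n,2]\simeq\textrm{Ci}[2n,\{1,n-1\}]$ is true (it is Lemma~5.2 of \cite{accordionspmh}, invoked in the proof of Theorem~\ref{theorem bip iso}), but since it is precisely the case where cycle edges and spokes must be interleaved, it is the one construction that cannot be waved at, and your write-up is incomplete without it.

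In the necessity direction the opposite-edge graph $\Gamma$ is a sound invariant and your computation of $\Gamma(A[n,k])$ checks out, but the pivotal assertion that ``in $\textrm{Ci}[2n,\{a,b\}]$ opposite edges of a $4$-cycle have equal length'' is false as a general statement about quartic circulants. When $a+b=n$ (entirely possible with $a,b$ odd and $n$ even, e.g.\ $\textrm{Ci}[16,\{3,5\}]$), the graph contains the $4$-cycle $x_i,x_{i+a},x_{i+2a},x_{i-b}$ whose two length-$a$ edges are \emph{consecutive}; monochromatic and $(a,a,a,b)$-type $4$-cycles can also occur for suitable $a,b$. The claim you need does hold under your standing hypotheses, but for a reason you never state: every edge of $\textrm{Ci}[2n,\{a,b\}]$ already lies on the two alternating $4$-cycles $x_i,x_{i+a},x_{i+a+b},x_{i+b}$ and $x_i,x_{i+a},x_{i+a-b},x_{i-b}$ (distinct because $b\neq n$), so once you have established that every edge of $A[n,k]\cong\textrm{Ci}[2n,\{a,b\}]$ lies on exactly two $4$-cycles, these alternating ones exhaust all $4$-cycles and the equal-length property of opposite edges follows. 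The same missing observation is what actually justifies the step ``the extra $4$-cycle through a length-$b$ edge avoids all length-$a$ edges'' in your $k=\tfrac n2$ case. With that one-line lemma inserted, both halves of your necessity argument close up and the cycle-length mismatch ($\Gamma(A[n,k])$ has two cycles of length $n$, while every cycle of $\Gamma(\textrm{Ci}[2n,\{a,b\}])$ has length $2n/\gcd(n,a)$ or $2n/\gcd(n,b)$ with $\gcd(n,a),\gcd(n,b)$ odd, hence never $n$) delivers the contradiction; without it, the central step of your proof rests on an unproved and, in general, false claim.
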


In particular, by Lemma \ref{lemma bipartite} and Theorem \ref{theorem accordion char circ}, the bipartite accordion graph $A[n,k]$ is circulant only when $k=2$.
We also remark that every accordion graph $A[n,k]$ with outer-cycle $(u_1, u_2, \ldots, u_n)$ and inner cycle $(v_1, v_2, \ldots, v_n)$ has the following natural automorphism $\psi:V(A[n,k])\rightarrow V(A[n,k])$ which exchanges the outer- and inner-cycles as follows:
\begin{itemize}
\item $\psi(u_{1})=v_{1}$ and $\psi(v_{1})=u_{1}$; and
\item for every $i\in\{2,3,\ldots, n\}$, $\psi(u_{i})=v_{2-i}$ and $\psi(v_{i})=u_{2-i}$.
\end{itemize}
This means that the $n$-tuple of vertices $(u_{1}, u_{2}, \ldots, u_{n})$ is  mapped to the $n$-tuple of vertices $(v_{1}, v_{n}, \ldots, v_{2})$, and vice-versa.

\subsection{Isomorphisms in the class of accordion graphs}\label{section isomorphic accordions}

In the following theorem we establish when two accordion graphs $A[n,k_1]$ and $A[n,k_2]$, $k_1\neq k_2$, are isomorphic.

\begin{theorem}\label{theorem iso accordions}
Let $k_{1}$ and $k_{2}$ be two integers such that $1\leq k_{1}<k_{2}\leq\frac{n}{2}$. The accordion graphs $A[n,k_{1}]$ and $A[n,k_{2}]$ are isomorphic if and only if the following two conditions hold:
\begin{enumerate}[(i)]
\item $\gcd(n,k_{1})=\gcd(n,k_{2})=2$; and 
\item $\frac{k_{1}k_{2}}{2}\equiv\pm2\pmod{n}$. 
\end{enumerate}
\end{theorem}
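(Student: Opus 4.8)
The plan is to prove both directions by analysing the structure of the spokes of an accordion graph. First I would establish some invariants. By Lemma~\ref{lemma bipartite}, bipartiteness forces $n,k_1,k_2$ all even in the bipartite case, so a first coarse split is between the bipartite and non-bipartite subcases; but I expect the cleaner route is to work directly with $\gcd(n,k)$, since condition~(i) singles out $\gcd=2$. The key observation is that the subgraph of $A[n,k]$ induced by the spokes (vertical and diagonal) is a disjoint union of cycles, and the number and lengths of these cycles depends only on $n$ and $\gcd(n,k)$: following Remark~\ref{RemarkDrawing}, deleting suitable cycle edges yields $C_{n_1}\square P_{n_2}$ with $n_2=\gcd(n,k)$ and $n_1 n_2 = 2n$. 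So if $A[n,k_1]\cong A[n,k_2]$ then in particular these spoke-structures must be compatible, which I would use to force $\gcd(n,k_1)=\gcd(n,k_2)$. Then I must rule out every common value of the gcd except $2$; I expect that for $\gcd(n,k)=d$ the ``$C_{n_1}\square P_{n_2}$ skeleton'' together with the way the extra edges reconnect it (the $2\gamma$-jump in Remark~\ref{RemarkDrawing}) gives a rigid enough structure that $k_1$ and $k_2$ are essentially determined up to the automorphism $\psi$ and the obvious relabellings, and only when $d=2$ is there genuine freedom — this is where condition~(ii) enters.

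For the ``if'' direction, assuming $\gcd(n,k_1)=\gcd(n,k_2)=2$ and $\tfrac{k_1k_2}{2}\equiv\pm 2\pmod n$, I would write down an explicit isomorphism $\varphi:A[n,k_1]\to A[n,k_2]$. The natural candidate multiplies indices by a fixed unit. Concretely, since $\gcd(n,k_2)=2$, there is a well-defined ``half'' behaviour: I would look for a constant $c$ with $\gcd(c,n)$ controlled so that the map sending $u_i\mapsto$ (some vertex with index a linear function of $i$) and $v_i\mapsto$ (likewise) carries the four edge-orbits of $A[n,k_1]$ (outer cycle, inner cycle, vertical spokes $u_iv_i$, diagonal spokes $u_iv_{i+k_1}$) to the four edge-orbits of $A[n,k_2]$. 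The congruence $\tfrac{k_1k_2}{2}\equiv\pm2$ should be exactly the arithmetic condition that makes the images of the $k_1$-diagonal spokes land on genuine spokes of $A[n,k_2]$ (either vertical or $k_2$-diagonal, depending on the sign), while the images of the vertical spokes of $A[n,k_1]$ become the complementary family in $A[n,k_2]$; the cycle edges of $A[n,k_1]$ should be sent to cycle edges by choosing the index multiplier to be $\pm1$ on each of $\mathcal U$ and $\mathcal V$, possibly composing with $\psi$. I would verify adjacency is preserved on each of the four types and conclude $\varphi$ is an isomorphism.

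For the ``only if'' direction the heavy lifting is to show that an isomorphism between $A[n,k_1]$ and $A[n,k_2]$ must respect enough structure to extract (i) and (ii). Here I would identify a graph invariant that distinguishes the cycle edges from the spokes — for instance, counting $4$-cycles, or $3$-cycles, through each edge, or more robustly using the fact that deleting the two cycles $\mathcal U$ and $\mathcal V$ leaves a perfect matching plus another perfect matching (the spokes) forming a union of cycles whose lengths encode $n/\gcd(n,k)$. Showing that any isomorphism either preserves the partition $\{\mathcal U,\mathcal V\}$ or can be postcomposed with $\psi$ so that it does, and that it sends cycle edges to cycle edges, reduces the isomorphism to a pair of rotations/reflections of $\mathbb Z_n$; tracking where the vertical and diagonal spokes go then yields a system of congruences equivalent to $\gcd(n,k_1)=\gcd(n,k_2)=2$ together with $\tfrac{k_1 k_2}{2}\equiv\pm2\pmod n$. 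I expect the main obstacle to be precisely this rigidity argument: ensuring that there are no ``exotic'' isomorphisms that scramble cycle edges and spokes, particularly for small $n$ or when $A[n,k_i]$ happens to coincide with a Cartesian product of cycles or with a circulant graph (Theorem~\ref{theorem accordion char circ}), where extra automorphisms can appear. Handling those degenerate values of $(n,k)$ as separate cases, and otherwise invoking the $4$-cycle/spoke invariant to pin down the edge-colour classes, is the crux of the proof.
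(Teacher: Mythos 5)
There is a genuine gap, and it sits at the centre of both directions of your argument: you assume that an isomorphism $\pi\colon A[n,k_{1}]\rightarrow A[n,k_{2}]$ can be arranged (after composing with $\psi$) to send cycle edges to cycle edges and spokes to spokes, and your explicit ``if''-direction map is built to do exactly that. But no such isomorphism can exist when $k_{1}\neq k_{2}$. Indeed, if $\pi$ preserved the cycle-edge $2$-factor, it would map the inner cycle of $A[n,k_{1}]$ isometrically onto a cycle of the cycle-edge $2$-factor of $A[n,k_{2}]$; the two spoke-neighbours of a vertex lie at distance $k_{i}$ (equivalently $n-k_{i}$) along that cycle, so one would get $\{k_{1},n-k_{1}\}=\{k_{2},n-k_{2}\}$ and hence $k_{1}=k_{2}$. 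For the same reason, any edge invariant (such as your proposed $3$- or $4$-cycle counts) that separated cycle edges from spokes would show that $A[n,k_{1}]\not\cong A[n,k_{2}]$ for all $k_{1}\neq k_{2}$, contradicting the theorem. The paper's proof starts from the opposite observation: since $k_{1}\neq k_{2}$, the isomorphism must carry cycle edges to spokes and vice versa. This is precisely what forces condition (i): the spokes of $A[n,k_{1}]$ must be the preimage of the two cycle-edge $n$-cycles of $A[n,k_{2}]$, and since the spoke subgraph of $A[n,k]$ is a disjoint union of $\gcd(n,k)$ cycles of length $2n/\gcd(n,k)$, this happens exactly when $\gcd(n,k_{1})=2$ (and symmetrically $\gcd(n,k_{2})=2$). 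Your plan of ``forcing $\gcd(n,k_{1})=\gcd(n,k_{2})$ from compatibility of spoke structures'' does not get off the ground without first pinning down how $\pi$ acts on the edge types, and it would not in any case single out the value $2$.

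Correspondingly, the ``if'' direction is not an index-multiplication map preserving the four edge classes: one takes the two disjoint $n$-cycles $C_{1},C_{2}$ formed by the spokes of $A[n,k_{1}]$ (these exist because $\gcd(n,k_{1})=2$), relabels their consecutive vertices as the outer and inner cycles of $A[n,k_{2}]$, and checks that the old cycle edges of $A[n,k_{1}]$ then realise the vertical and diagonal spokes of $A[n,k_{2}]$; the congruence $\tfrac{k_{1}k_{2}}{2}\equiv\pm2\pmod{n}$ is exactly what makes the diagonal spokes $u_{i}'v_{i+k_{2}}'$ land on edges of $A[n,k_{1}]$ (the sign dictating the orientation of the relabelling). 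Finally, the cases $k_{1}=1$ and $k_{1}=2$ require separate elimination (via triangles, and via bipartiteness combined with Theorem~\ref{theorem accordion char circ}, respectively); your brief mention of ``degenerate cases'' does not supply these arguments.
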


\begin{proof}
($\Rightarrow$) Assume first that $A[n,k_1]$ and $A[n,k_2]$ are isomorphic, where $1\leq k_1<k_2 \leq \frac{n}{2}$. Let the outer- and inner-cycles of $A[n,k_{1}]$ be  $(u_{1},\ldots, u_{n})$ and $(v_{1},\ldots, v_{n})$, respectively, such that, for every $i\in[n]$, $u_{i}v_{i}$ and $u_{i}v_{i+k_{1}}$ are edges of $A[n,k_{1}]$. 
Similarly, let the outer- and inner-cycles of $A[n,k_{2}]$ be $(u_{1}',\ldots, u_{n}')$ and $(v_{1}',\ldots, v_{n}')$, respectively, such that, for every $i\in[n]$, $u_{i}'v_{i}'$ and $u_{i}'v_{i+k_{2}}'$ are edges of $A[n,k_{2}]$. 
Since $k_{1}\neq k_{2}$, there exists an isomorphism $\pi:V(A[n,k_{1}])\rightarrow V(A[n,k_{2}])$ which maps the two endvertices of a cycle edge of $A[n,k_{1}]$ to the two endvertices of a spoke of $A[n,k_{2}]$, and vice-versa. 
Due to the natural automorphism $\psi$ of $A[n,k_{2}]$ mentioned above, without loss of generality, we can assume that $\pi(u_{1})=v_{1}'$ and $\pi(u_{2})=u_{1}'$. We first show that $k_{1}$ cannot be equal to $1$ or $2$.\\

\noindent\textbf{Claim.} $k_{1}\geq 3$. 

\noindent{\emph{Proof of Claim.}} We first remark that for every $n\geq 3$, the accordion graph $A[n,k_{1}]$ contains a cycle of length $3$ if and only if $k_{1}=1$. Consequently, there does not exist $k_{2}>1$, such that $A[n,1]\cong A[n,k_{2}]$, because, in that case, $A[n,k_{2}]$ would be triangle-free. Therefore, $k_{1}\geq 2$. 
Suppose $k_{1}=2$. By Lemma \ref{lemma bipartite}, if $n$ is even, $k_{2}$ must be even, because $A[n,2]$ is bipartite, and so $A[n,k_{2}]$ must be bipartite as well. However, by Theorem \ref{theorem accordion char circ}, $A[n,2]$ is circulant whilst $A[n,k_{2}]$ is not, a contradiction. Thus, $n$ must be odd, and since $k_{1}=2$, $k_{2}\geq 3$ and $n\geq 2k_{2}+1\geq 7$. We consider three cases depending on whether $\pi^{-1}(u_{2}')$ is $u_3$, $v_2$ or $v_4$.\\

\noindent\textbf{Case 1.} If $\pi^{-1}(u_{2}')=u_{3}$, then $\pi^{-1}(v_{2}')$ is equal to $u_{4}, v_{5}$ or $v_{3}$. If we first suppose that $\pi^{-1}(v'_2)=u_4$, then, $u_1$ is adjacent to $u_4$, a contradiction, since $n\geq 7$. On the other hand, if we suppose that $\pi^{-1}(v'_2)=v_5$, then we get that $u_1v_5$ is an edge of $A[n,k_1]$, a contradiction since $k_1=2$. Thus, $\pi^{-1}(v'_2)=v_3$.
However, this means that $\pi^{-1}(v_{3}')$ is either equal to $v_{2}$ or $v_{4}$, and in both cases, $\pi(v_{2})$ and $\pi(v_{4})$ have to be adjacent to $\pi(u_{2})$, since $\pi$ is an isomorphism and $u_{2}v_{2}$ and $u_{2}v_{4}$ are edges in $A[n,k_{1}]$, when $k_{1}=2$. Therefore, $v_{3}'$ must be adjacent to $u_{1}'$, a contradiction, since $2=k_{1}<k_{2}\leq \frac{n}{2}$.\\

\noindent\textbf{Case 2.} If $\pi^{-1}(u_{2}')=v_{2}$, then $\pi^{-1}(u_{3}')$ is equal to $v_{1}, v_{3}$ or $u_{n}$. In each case, $\pi^{-1}(u_{3}')$ is adjacent to $u_{1}$ in $A[n,k_{1}]$, in particular, since $k_{1}=2$. This implies that $u_{3}'$ must be adjacent to $v_{1}'$ in $A[n,k_{2}]$, since $\pi$ is an isomorphism. However, if $k_{2}\equiv-2\pmod{n}$, then, $k_{2}=n-2$, and since by Definition \ref{def AccordionGraphs} $k_{2}\leq \frac{n}{2}$, this implies that $n\leq 4$, a contradiction.\\

\noindent\textbf{Case 3.} If $\pi^{-1}(u_{2}')=v_{4}$, then $\pi^{-1}(v_{2}')=v_{3}$, and consequently, $\pi^{-1}(v_{3}')$ is equal to $u_{3}$ or $v_{2}$, which are both adjacent to $u_{2}=\pi^{-1}(u_{1}')$ in $A[n,k_{1}]$. As before, this means that $u_{1}'$ has to be adjacent to $v_{3}'$, a contradiction once again, since $2=k_{1}<k_{2}\leq \frac{n}{2}$. Therefore, $k_{1}\neq 2$.\,\,\,{\tiny$\blacksquare$}\\

Hence, we can further assume that $k_{1}\geq 3$ and $n\geq 2k_{2}\geq 2(k_{1}+1)\geq 8$. We next show that $\gcd(n,k_{1})=\gcd(n,k_{2})=2$. The vertex $u_{2}'$ is adjacent to $u_{1}'$ and is the endvertex of a path of length two whose other endvertex is $v_{1}'$ and which does not contain $u_{1}'$. Thus, $\pi^{-1}(u'_2)$ is adjacent to $u_2$ and is also the endvertex of a path of length two not containing $u_{2}$ whose other endvertex is $u_{1}$. Consequently, since $k_1\geq 3$ and $n\geq 8$, $\pi^{-1}(u'_2) \neq u_3$, and we have two cases to consider, namely when $\pi^{-1}(u'_2)$ is either $v_2$ or $v_{2+k_1}$.\\

\noindent\textbf{Case (a).} Consider first $\pi^{-1}(u_{2}')=v_{2}$. Since $\pi^{-1}(v_{1}')=u_{1}$ and $\pi^{-1}(u_{1}')=u_{2}$, $\pi^{-1}(v_{2}')$ cannot be equal to $u_{n}$ or $v_{3}$, because $k_1\geq 3$ and thus, $u_{n}v_{2}$ and $u_{1}v_{3}$ are not edges in $A[n,k_{1}]$. Hence, $\pi^{-1}(v_{2}')=v_{1}$. Consequently, $\pi^{-1}(u_{3}')$ is either equal to $u_{2-k_{1}}$ or $v_{3}$. However, if $\pi^{-1}(u_{3}')=v_{3}$, then $\pi^{-1}(v_{3}')$ must be adjacent to both $v_1$ and $v_3$, which is impossible. Therefore, $\pi^{-1}(u_{3}')=u_{2-k_{1}}$, and as a result, $\pi^{-1}(v_{3}')$ must be adjacent to both $v_1$ and $u_{2-k_1}$, implying that $\pi^{-1}(v_{3}')=u_{1-k_{1}}$, since $k_{1}\geq 3$. This means that $u_{2}'u_{3}'$ and $v_{2}'v_{3}'$ are the images of (consecutive) diagonal spokes of $A[n,k_{1}]$ under $\pi$. Repeating the same reasoning used for $u_{2}'$ and $v_{2}'$, we have $\pi^{-1}(u_{4}')=v_{2-k_{1}}$ and $\pi^{-1}(v_{4}')=v_{1-k_{1}}$, that is, $u_{3}'u_{4}'$ and $v_{3}'v_{4}'$ are the images of (consecutive) vertical spokes of $A[n,k_{1}]$ under $\pi$. Continuing in this manner, we deduce that the spokes of $A[n,k_{1}]$ induce two vertex-disjoint $n$-cycles corresponding to $(u_{1}',\ldots, u_{n}')$ and $(v_{1}',\ldots, v_{n}')$. This means that $\gcd(n,k_{1})=2$. Thus, the spokes of $A[n,k_1]$ are mapped onto the cycle edges of $A[n,k_2]$, implying that the cycle edges of $A[n,k_1]$ are mapped onto the spokes of $A[n,k_2]$ and so, $\gcd(n,k_2)=2$. We note that this implicitly implies that $n$, $k_1$ and $k_2$ are all even.
Moreover, we note that the cycles $(u_{1}',\ldots, u_{n}')$ and $(v_{1}',\ldots, v_{n}')$ in $A[n,k_{2}]$ respectively correspond to the cycles 
\begin{linenomath}
$$(u_{2},v_{2},u_{2-k_{1}},v_{2-k_{1}},\ldots,u_{2-(\frac{n}{2}-1)k_{1}},v_{2-(\frac{n}{2}-1)k_{1}}),$$
\end{linenomath} 
and 
\begin{linenomath}
$$(u_{1},v_{1},u_{1-k_{1}},v_{1-k_{1}},\ldots,u_{1-(\frac{n}{2}-1)k_{1}},v_{1-(\frac{n}{2}-1)k_{1}}),$$
\end{linenomath} 
in $A[n,k_{1}]$, under the isomorphism $\pi$. In particular, $u_{1}'$, which is equal to $\pi(u_{2})$, is adjacent to $v_{1+k_{2}}'$, which is equal to $\pi(u_{1-(\frac{k_{2}}{2})k_{1}})$. This is possible since $k_{2}$ is even. Since $\pi^{-1}(v_{1}')=u_{1}$, the vertex $u_{1-(\frac{k_{2}}{2})k_{1}}$ must be equal to $u_{3}$, implying that $\frac{k_{1}k_{2}}{2}\equiv-2\pmod{n}$.\\

\noindent\textbf{Case (b).} Now, if $\pi^{-1}(u_{2}')=v_{2+k_{1}}$, one can similarly deduce that $\gcd(n,k_{1})=\linebreak \gcd(n,k_{2})=2$, and that the cycles $(u_{1}',\ldots, u_{n}')$ and $(v_{1}',\ldots, v_{n}')$ in $A[n,k_{2}]$ respectively correspond to the cycles 
\begin{linenomath}
$$(u_{2},v_{2+k_{1}},u_{2+k_{1}},v_{2+2k_{1}},\ldots,u_{2+(\frac{n}{2}-1)k_{1}},v_{2}),$$
\end{linenomath} 
and 
\begin{linenomath}
$$(u_{1},v_{1+k_{1}},u_{1+k_{1}},v_{1+2k_{1}},\ldots,u_{1+(\frac{n}{2}-1)k_{1}},v_{1}),$$
\end{linenomath} 
in $A[n,k_{1}]$, under the isomorphism $\pi$. In particular, $u_{1}'$, which is equal to $\pi(u_{2})$, is adjacent to $v_{1+k_{2}}'$, which is equal to $\pi(u_{1+(\frac{k_{2}}{2})k_{1}})$. Once again, this is possible since $k_{2}$ is even. Since $\pi^{-1}(v_{1}')=u_{1}$, the vertex $u_{1+(\frac{k_{2}}{2})k_{1}}$ is equal to $u_{3}$, implying that $\frac{k_{1}k_{2}}{2}\equiv2\pmod{n}$. Therefore, $\frac{k_{1}k_{2}}{2}\equiv \pm 2\pmod{n}$, as required.\\

($\Leftarrow$) Conversely, assume $\gcd(n,k_{1})=\gcd(n,k_{2})=2$ and $\frac{k_{1}k_{2}}{2}\equiv\pm2\pmod{n}$. Since $\gcd(n,k_{1})=2$, $A[n,k_{1}]$ contains the following two vertex-disjoint $n$-cycles: 
\begin{linenomath}
$$C_{1}=(v_{1},u_{1},v_{1+k_{1}},u_{1+k_{1}},\ldots,v_{1+(\frac{n}{2}-1)k_{1}},u_{1+(\frac{n}{2}-1)k_{1}}),$$
\end{linenomath} 
and 
\begin{linenomath}
$$C_{2}=(v_{2},u_{2},v_{2+k_{1}},u_{2+k_{1}},\ldots, v_{2+(\frac{n}{2}-1)k_{1}},u_{2+(\frac{n}{2}-1)k_{1}}).$$
\end{linenomath}
We consider two cases, depending on the value of $\frac{k_{1}k_{2}}{2}$.\\ 

\begin{figure}[h]
      \centering
      \includegraphics[width=0.3\textwidth]{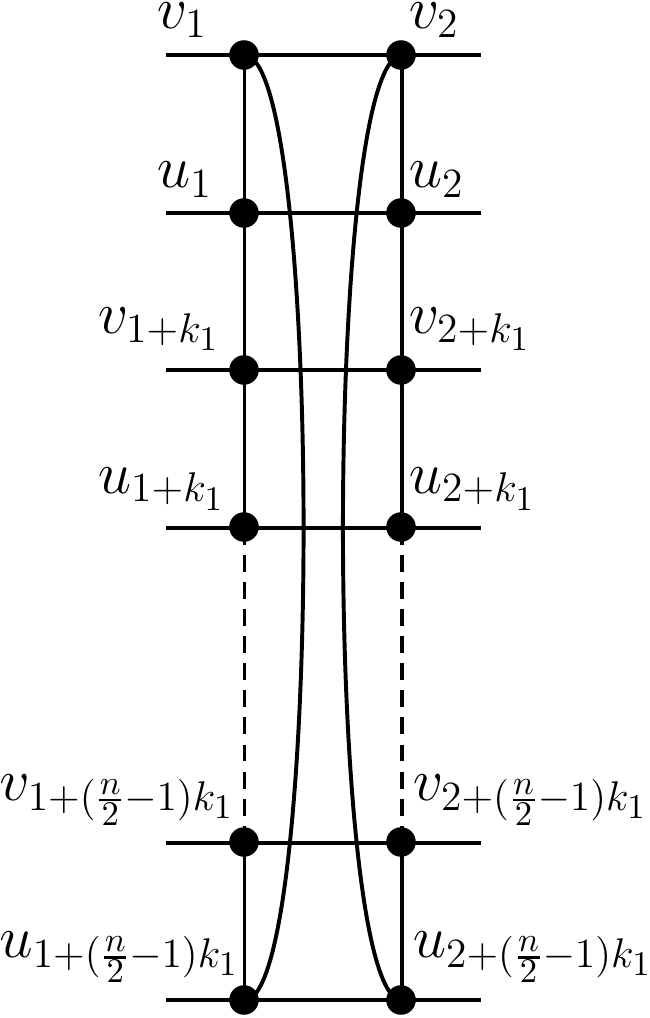}
      \caption{$A[n,k_{1}]$ when $\gcd(n,k_{1})=2$}
      \label{figure gcdnk=2}
\end{figure} 

\noindent\textbf{Case A.} Assume first that $\frac{k_{1}k_{2}}{2}\equiv-2\pmod{n}$. We label the consecutive vertices of $C_{1}$ and $C_{2}$ as $u_{1}',\ldots, u_{n}'$ and $v_{1}',\ldots, v_{n}'$, respectively, such that $u_{1}'=v_{1}, u_{2}'=u_{1}, v_{1}'=v_{2}$, and $v_{2}'=u_{2}$. Let $i\in[n]$. We claim that $u_{i}'u_{i+1}', v_{i}'v_{i+1}', u_i'v_i',u_i'v_{i+k_{2}}'$ are edges of $A[n,k_{1}]$. By definition of $C_{1}$ and $C_{2}$, it can be seen that $u_{i}'u_{i+1}', v_{i}'v_{i+1}', u_i'v_i'$ are edges of $A[n,k_{1}]$, and so, all that is left to show is that $u_i'v_{i+k_{2}}'\in E(A[n,k_{1}])$. Without loss of generality, assume $i=1$. Since $\frac{k_{1}k_{2}}{2}\equiv-2\pmod{n}$, $v_{1+k_{2}}'$ is equal to $v_{2+(\frac{k_{2}}{2})k_{1}}=v_{n}$, which is in fact adjacent to $u_{1}'$, since $u_{1}'=v_{1}$. By a similar argument, $u_i'v_{i+k_{2}}'\in E(A[n,k_{1}])$, for every $i\in[n]$, proving our claim. Consequently, the quartic graph with vertices $\{u_1', u_2',\ldots , u_{n}', v_1', v_2',\ldots , v_{n}'\}$ and edge set $\{u_{i}'u_{i+1}',v_{i}'v_{i+1}',u_i'v_i',u_i'v_{i+k_{2}}': i\in[n]\},$ is a spanning subgraph of $A[n,k_{1}]$ which is isomorphic to $A[n,k_{2}]$. Since $A[n,k_{1}]$ is 4-regular, $A[n,k_{1}]\cong A[n,k_{2}]$, as required.\\

\noindent\textbf{Case B.} If $\frac{k_{1}k_{2}}{2}\equiv2\pmod{n}$, we label the consecutive vertices of $C_{1}$ and $C_{2}$ as \linebreak$u_{1}',u_{n}',\ldots, u_{2}'$ and $v_{1}',v_{n}',\ldots, v_{2}'$, respectively, such that $u_{1}'=v_{1}, u_{n}'=u_{1}, v_{1}'=v_{2}$, and $v_{n}'=u_{2}$. Once again, we claim that, for every $i\in[n]$, $u_{i}'u_{i+1}', v_{i}'v_{i+1}', u_i'v_i',u_i'v_{i+k_{2}}'$ are edges of $A[n,k_{1}]$. As in Case A, it can be seen that $u_{i}'u_{i+1}', v_{i}'v_{i+1}', u_i'v_i'$ are edges of $A[n,k_{1}]$, and once again, all that is left to show is that $u_i'v_{i+k_{2}}'\in E(A[n,k_{1}])$. Without loss of generality, assume $i=1$. Since $\frac{k_{1}k_{2}}{2}\equiv2\pmod{n}$, the vertex $v_{1+k_{2}}'$ is equal to $v_{2-(\frac{k_{2}}{2})k_{1}}=v_{n}$, which is in fact adjacent to $u_{1}'$, since $u_{1}'=v_{1}$. This proves our claim, and as in the previous case, we have $A[n,k_{1}]\cong A[n,k_{2}]$, as required.
\end{proof}

\begin{corollary}
Let $k_{1}$, $k_{2}$ and $k_{3}$ be three positive integers such that $k_{1}\neq k_{2}$ and $k_{1}\neq k_{3}$. If $A[n,k_{1}]\cong A[n,k_{2}]$ and $A[n,k_{1}]\cong A[n,k_{3}]$, then $k_{2}=k_{3}$. 
\end{corollary}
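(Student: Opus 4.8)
The plan is to reduce the statement to a short number-theoretic computation via Theorem~\ref{theorem iso accordions}. Implicit in writing $A[n,k_i]$ is the standing assumption $n\ge 3$ and $1\le k_i\le n/2$ of Definition~\ref{def AccordionGraphs}, which I will use. Since $k_1\neq k_2$ and $A[n,k_1]\cong A[n,k_2]$, Theorem~\ref{theorem iso accordions} (applied to $k_1$ and $k_2$ in whichever order makes the first smaller; both of its conclusions are symmetric in the two parameters) gives $\gcd(n,k_1)=\gcd(n,k_2)=2$ and $\tfrac{k_1k_2}{2}\equiv\pm2\pmod n$. In the same way, from $k_1\neq k_3$ and $A[n,k_1]\cong A[n,k_3]$ we obtain $\gcd(n,k_3)=2$ and $\tfrac{k_1k_3}{2}\equiv\pm2\pmod n$.

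Next I would unpack the divisibility conditions. From $\gcd(n,k_i)=2$ it follows that $n$ is even and each $k_i$ is even; writing $n=2p$ and $k_i=2m_i$, the identity $\gcd(2m_i,2p)=2\gcd(m_i,p)$ gives $\gcd(m_i,p)=1$, so in particular $m_1$ is a unit modulo $p$. Substituting into $\tfrac{k_1k_2}{2}\equiv\pm2\pmod{2p}$ and cancelling a factor of $2$, the congruence becomes $m_1m_2\equiv\pm1\pmod p$; likewise $m_1m_3\equiv\pm1\pmod p$. Multiplying by $m_1^{-1}$ yields $m_2\equiv\pm m_1^{-1}\equiv\pm m_3\pmod p$, and multiplying back by $2$ this says $k_2\equiv\pm k_3\pmod n$.

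It then remains to upgrade $k_2\equiv\pm k_3\pmod n$ to $k_2=k_3$ using $1\le k_2,k_3\le n/2$. If $k_2\equiv k_3\pmod n$ then, both being in $\{1,\dots,n/2\}$, they are equal. If $k_2\equiv-k_3\pmod n$ then $k_2+k_3$ is a multiple of $n$ lying in $[2,n]$, hence equals $n$, which together with $k_2,k_3\le n/2$ forces $k_2=k_3=n/2$. In either case $k_2=k_3$, completing the argument.

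I do not anticipate a genuine obstacle here: the result is essentially a uniqueness read-off from the classification in Theorem~\ref{theorem iso accordions}. The one spot deserving attention is the final step, where the sign ambiguity in $k_2\equiv\pm k_3\pmod n$ must be shown to collapse; this is exactly the place where the bound $k_2,k_3\le n/2$ (rather than merely $<n$) is indispensable.
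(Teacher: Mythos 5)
Your proof is correct and follows essentially the same route as the paper's: both apply Theorem~\ref{theorem iso accordions} twice to get $\gcd(n,k_1)=2$ and the two congruences $\tfrac{k_1k_i}{2}\equiv\pm2\pmod{n}$, then cancel $k_1$ using $\gcd(n,k_1)=2$ and finish with the bound $k_2,k_3\le n/2$. The only (cosmetic) difference is that the paper packages the cancellation as a contradiction via $\lcm(n,k_1)=\tfrac{nk_1}{2}$ applied to $\tfrac{k_1(k_2\pm k_3)}{2}$, whereas you invert $k_1/2$ modulo $n/2$ and resolve the sign ambiguity directly, handling the boundary case $k_2=k_3=n/2$ explicitly.
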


\begin{proof}
Suppose $k_{2}\neq k_{3}$, and without loss of generality, assume that $k_{2}>k_{3}$. By Theorem \ref{theorem iso accordions}, $\frac{k_{1}k_{2}}{2}=\alpha n \pm 2$, and $\frac{k_{1}k_{3}}{2}=\beta n \pm 2$, for some positive integers $\alpha$ and $\beta$. Consequently, we either have $\frac{k_{1}(k_{2}+k_{3})}{2}=(\alpha+\beta)n$, or $\frac{k_{1}(k_{2}-k_{3})}{2}=(\alpha-\beta)n$. In particular, we note that by Theorem \ref{theorem iso accordions}, both $k_{2}$ and $k_{3}$ are even, and that by Definition \ref{def AccordionGraphs}, $\frac{(k_{2}\pm k_{3})}{2}<\frac{n}{2}$. Consequently, there exists an integer which is strictly less than $\frac{n}{2}$ such that when multiplied by $k_{1}$ gives a (non-zero) multiple of $n$, that is, $(\alpha+\beta)n$ or $(\alpha-\beta)n$. However, this is a contradiction, because, from Theorem \ref{theorem iso accordions}, $\gcd(n,k_{1})=2$ and so the least common multiple of $n$ and $k_{1}$ is $\frac{k_{1}n}{2}$. 
\end{proof}

\subsection{Isomorphisms between quartic circulant graphs and accordion graphs}\label{section circulant accordion graphs}

By Theorem 1 in \cite{heuberger}, the quartic circulant graph $\textrm{Ci}[2n,\{a,b\}]$ is bipartite if and only if both $a$ and $b$ are odd. We shall need this result when proving Theorem \ref{theorem bip iso} and Theorem \ref{theorem nonbip iso}.

\subsubsection{Bipartite circulants which are accordion graphs}

\begin{theorem}\label{theorem bip iso}
Let both $a$ and $b$ be odd. Then, $\textrm{Ci}[2n,\{a,b\}]\simeq A[n,k]$ if and only if the following three conditions hold:
\begin{enumerate}[(i)]
\item $n$ is even and $k=2$;
\item $\gcd(2n,a)=\gcd(2n,b)=1$; and
\item $a+b=n$.
\end{enumerate}
\end{theorem}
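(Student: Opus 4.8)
The plan is to prove both directions by recognising the accordion graph $A[n,2]$ (with $n$ even) as a ``blown-up cycle''. The pairs $\{u_{i},v_{i+1}\}$, $i\in[n]$, partition $V(A[n,2])$, and for every $i$ all four neighbours of $u_{i}$ and all four neighbours of $v_{i+1}$ lie in $\{u_{i-1},v_{i}\}\cup\{u_{i+1},v_{i+2}\}$, with $\{u_{i},v_{i+1}\}\cup\{u_{i+1},v_{i+2}\}$ inducing a $4$-cycle; so $A[n,2]$ is isomorphic to the graph obtained from $C_{n}$ by blowing up each vertex into an independent pair and each edge into a copy of $K_{2,2}$. The same pattern occurs in $\textrm{Ci}[2n,\{1,n-1\}]$ via the pairs $\{x_{i},x_{i+n}\}$, since $x_{i}$ is joined to $x_{i\pm1}$ and to $x_{i+n-1},x_{i+n+1}$, i.e.\ to both elements of $\{x_{i-1},x_{i-1+n}\}$ and of $\{x_{i+1},x_{i+1+n}\}$. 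Hence $A[n,2]\cong\textrm{Ci}[2n,\{1,n-1\}]$, and it remains to show that conditions (i)--(iii) are exactly those under which $\textrm{Ci}[2n,\{a,b\}]$ collapses onto this graph.

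For sufficiency, assume (i)--(iii). Since $\gcd(2n,a)=1$, the index multiplication $x_{i}\mapsto x_{a^{-1}i}$ is an isomorphism $\textrm{Ci}[2n,\{a,b\}]\to\textrm{Ci}[2n,\{1,a^{-1}b\}]$; as $b=n-a$ and $a^{-1}$ is odd, $a^{-1}b\equiv a^{-1}n-1\equiv n-1\pmod{2n}$, so $\textrm{Ci}[2n,\{a,b\}]\cong\textrm{Ci}[2n,\{1,n-1\}]$. It then suffices to exhibit an explicit isomorphism onto $A[n,k]=A[n,2]$; the natural candidate is $\phi(x_{i})=u_{i}$ for $1\le i\le n$ and $\phi(x_{i})=v_{i-n+1}$ for $n+1\le i\le 2n$, and a short case analysis on the indices confirms that $\phi$ carries the $2n$ edges of length $1$ and the $2n$ edges of length $n-1$ bijectively onto the $n$ outer-cycle, $n$ inner-cycle, $n$ vertical-spoke and $n$ diagonal-spoke edges of $A[n,2]$. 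Since both graphs are $4$-regular of order $2n$, $\phi$ is an isomorphism.

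For necessity, suppose $\textrm{Ci}[2n,\{a,b\}]\cong A[n,k]$. By Theorem~1 of \cite{heuberger}, having $a$ and $b$ odd makes $\textrm{Ci}[2n,\{a,b\}]$ bipartite, so $A[n,k]$ is bipartite; by Lemma~\ref{lemma bipartite} both $n$ and $k$ are even, and since $A[n,k]$ is also circulant, Theorem~\ref{theorem accordion char circ} forces $k=2$, which is~(i). For (ii) and (iii) I would use the false-twin relation $N(x)=N(y)$, an isomorphism invariant. In the blown-up cycle above, for even $n\ge 6$ the false-twin classes are exactly the blow-up pairs, each of size $2$, and the quotient graph is $C_{n}$. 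Transporting this to $\textrm{Ci}[2n,\{a,b\}]$, the classes have size $2$ and, being permuted by the cyclic shift, must be the pairs $\{x_{j},x_{j+n}\}$; hence $N(x_{j})=N(x_{j+n})$, equivalently $\{a,-a,b,-b\}$ is fixed by adding $n$ modulo $2n$. A short analysis of this invariance (a ``within'' pairing $\{a,-a\}\mapsto\{a,-a\}$ would force $2a\equiv n$ and hence $a=b=n/2$, a contradiction) gives $a+b=n$. Finally, with $b=n-a\equiv-a\pmod n$, the false-twin quotient of $\textrm{Ci}[2n,\{a,n-a\}]$ is the circulant $\textrm{Ci}[n,\{a\}]$, which must equal $C_{n}$, and this forces $\gcd(n,a)=1$; since $a$ and $b=n-a$ are odd, $\gcd(2n,a)=\gcd(2n,b)=1$, giving (ii) and (iii). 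The case $n=4$ (where the blown-up cycle is $K_{4,4}$ and twin classes have size $4$) is handled directly: there $a,b$ are distinct odd integers below $4$, so $\{a,b\}=\{1,3\}$ and the conditions hold outright; $n=3$ cannot occur since a bipartite accordion has even order.

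I expect the necessity direction to be the main obstacle, with two delicate points: first, showing the false-twin classes of the circulant are precisely the antipodal pairs $\{x_{j},x_{j+n}\}$, which combines the cyclic symmetry with the fact that the corresponding classes of the blown-up cycle have size exactly $2$ (true only for $n\ge 6$, hence the separate treatment of $n=4$); and second, extracting $a+b=n$ and $\gcd(n,a)=1$ from the invariance of $\{a,-a,b,-b\}$ and the shape of the twin quotient, which also requires checking that $\{a,-a,b,-b\}$ really has four distinct elements (ensured by $1\le a,b\le n-1$ and $a\neq b$). The sufficiency direction is by comparison mechanical, the only mild subtlety being the congruence $a^{-1}n\equiv n\pmod{2n}$ used in the multiplier reduction.
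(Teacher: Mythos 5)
Your proof is correct, and in the necessity direction it takes a genuinely different route from the paper's. The paper establishes (ii) by supposing $\gcd(2n,a)\neq 1$, building a spanning subgraph $C_p\square P_{2n/p}$ out of the length-$a$ cycles, showing through a $4$-cycle analysis that these cycles must alternate between $\mathcal{U}$ and $\mathcal{V}$, and reaching a parity contradiction ($2n/p$ odd versus $\gcd(n,k)$ even); it establishes (iii) by a separate five-case ``forbidden configuration'' analysis of the possible length-patterns on the $4$-cycle $(u_1,u_2,v_2,v_1)$. You replace both arguments with a single structural observation: $A[n,2]$ is the blow-up of $C_n$ by independent pairs, so for $n\geq 6$ its false-twin classes are exactly the $n$ pairs $\{u_i,v_{i+1}\}$ and the quotient is $C_n$. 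Transporting this isomorphism invariant to the circulant, shift-invariance plus class size $2$ forces the classes to be the antipodal pairs $\{x_j,x_{j+n}\}$; the fixed-point-free involution $s\mapsto s+n$ on $\{a,-a,b,-b\}$ then yields $a+b=n$, and connectedness of the quotient $\textrm{Ci}[n,\{a\}]\cong C_n$ yields $\gcd(n,a)=1$ and hence (ii). All the steps you flag as delicate do go through: the four connection lengths are distinct since $0<a,b<n$ and $a\neq b$, and $n=4$ is rightly split off since the blow-up of $C_4$ is $K_{4,4}$ with a single twin class of size $4$. Your sufficiency argument is essentially the paper's multiplier isomorphism, except that you obtain the base case $A[n,2]\cong\textrm{Ci}[2n,\{1,n-1\}]$ directly from the blow-up description rather than citing Lemma 5.2 of the companion paper (which also makes your unverified explicit map $\phi$ redundant). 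On balance your necessity proof is shorter and more conceptual, at the cost of invoking the automorphism-invariance of the false-twin partition, while the paper's is longer but entirely hands-on.
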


\begin{proof}
As already stated above, since both $a$ and $b$ are odd, $\textrm{Ci}[2n,\{a,b\}]$ is bipartite. Moreover, by Lemma \ref{lemma bipartite}, $A[n,k]$ is bipartite if and only if both $n$ and $k$ are even. From Theorem \ref{theorem accordion char circ}, we also know that the only bipartite accordion graphs which are circulant are the ones having $k=2$. So it suffices to show that, for $n$ even, a bipartite circulant graph $\textrm{Ci}[2n,\{a,b\}]$ is isomorphic to $A[n,2]$ if and only if $\gcd(2n,a)=\gcd(2n,b)=1$ and $a+b=n$.
From Lemma 5.2 in \cite{accordionspmh}, we already know that $\textrm{Ci}[8,\{1,3\}]\simeq A[4,2]$, and since $1$ and $3$ are the only two distinct odd integers in $\{1,2,3\}$ which satisfy conditions (ii) and (iii) in the statement of Theorem \ref{theorem bip iso}, the result holds for $n=4$. Thus, in what follows we can assume $n\geq 6$.\\

($\Rightarrow$) Assume $\textrm{Ci}[2n,\{a,b\}]\simeq A[n,2]$. In what follows we denote $\textrm{Ci}[2n,\{a,b\}]$, or equivalently $A[n,2]$, by $G$. \\

\noindent\textbf{Claim 1.} $\gcd(2n,a)=\gcd(2n,b)=1$. 

\noindent{\emph{Proof of Claim 1.}} Suppose that $\gcd(2n,a)\not=1$, for contradiction. Then, the least common multiple of $a$ and $2n$ is $2na'$, for some $a'<a$. Consequently, since $a$ is odd, there exists an even integer $p$ such that $ap=2na'$. Moreover, since $a\neq a'$ and $a$ is odd, $\frac{a}{a'}$ (or equivalently $\frac{2n}{p}$) is odd and at least $3$, and so $p<n$. 
By considering the edges in $G$ having length $a$, there exists a partition $\mathcal{P}$ of the $2n$ vertices of $G$ into $\frac{2n}{p}$ sets, each inducing a $p$-cycle. This follows since $\gcd(2n,a)=\frac{2n}{p}\not=1$. We remark that $\mathcal{P}$ has an odd number of components, namely $\gcd(2n,a)$, or equivalently $\frac{2n}{p}$. 

Since $G$ is a connected quartic circulant graph and $\frac{2n}{p}>1$, two vertices on a particular $p$-cycle in $\mathcal{P}$ are adjacent in $G$ if and only if there is an edge of length $a$ between them. In other words, the subgraph induced by the vertices on a $p$-cycle in $\mathcal{P}$ is the $p$-cycle itself. 
Therefore, the graph contains two adjacent vertices $x_{i}$ and $x_{j}$ belonging to two different $p$-cycles from $\mathcal{P}$. Consequently, since the length of the edge $x_{i}x_{j}$ is $b$, $i\equiv j\pm b\pmod{2n}$, and so, the vertices of these two $p$-cycles induce $C_{p}\square P_{2}$, where $P_{2}$ is the path on two vertices. By a similar argument to that used on $x_{i}$ and $x_{j}$, we deduce that $G$ contains a spanning subgraph $G_{0}$ isomorphic to $C_{p}\square P_{\frac{2n}{p}}$.
We next claim that: 
\begin{enumerate}[(a)]
\item given two adjacent vertices from some $p$-cycle in $\mathcal{P}$, say $x_{i}$ and $x_{i+a}$, if $x_{i}$ is a vertex in $\mathcal{U}=\{u_{1},u_{2},\ldots, u_{n}\}$, then $x_{i+a}$ is a vertex in $\mathcal{V}=\{v_{1},v_{2},\ldots, v_{n}\}$, or vice-versa; and
\item given two adjacent vertices from two different $p$-cycles in $\mathcal{P}$, say $x_{i}$ and $x_{i+b}$, we have that either both belong to $\mathcal{U}$, or both belong to $\mathcal{V}$.
\end{enumerate}
First of all, we note that the vertices inducing a $p$-cycle from $\mathcal{P}$, cannot all belong to $\mathcal{U}$, since the latter set of vertices induces a $n$-cycle, and $p<n$. Similarly, the vertices inducing a $p$-cycle from $\mathcal{P}$, cannot all belong to $\mathcal{V}$. Secondly, let $i\in[2n]$, such that $x_{i}$ is of degree $3$ in $G_{0}$, and $x_{i}x_{i+b}\in E(G_{0})$. Consider the $4$-cycle $(x_{i}, x_{i+a},x_{i+a+b}, x_{i+b})$. Since $n>4$, these four vertices cannot all belong to $\mathcal{U}$ (or $\mathcal{V}$). Also, we cannot have three of them belonging to $\mathcal{U}$ (or $\mathcal{V}$). For, suppose not. Without loss of generality, suppose $x_{i}=u_{1}$, $x_{i+a}=u_{2}$, and $x_{i+a+b}=u_{3}$. The vertex $x_{i+b}$ can either be equal to $v_{1}$ or $v_{3}$, but, since $n>4$ and $k=2$, $x_{i+b}$ must be equal to $v_{3}$. Furthermore, the vertex $x_{i+2a}$ can be equal to $v_{2}$ or $v_{4}$. But, $v_{2}v_{3}$ and $v_{3}v_{4}$ are both edges in $A[n,2]$, and so $x_{i+2a}$ is adjacent to $x_{i+b}$, a contradiction, since $G_{0}\simeq C_{p}\square P_{\frac{2n}{p}}$.
By the same reasoning, one can deduce that there is no $4$-cycle in $G_{0}$ having exactly three vertices belonging to $\mathcal{U}$ (or $\mathcal{V}$).

Therefore, exactly two vertices from $(x_{i}, x_{i+a},x_{i+a+b},x_{i+b})$ belong to $\mathcal{U}$, and the other two belong to $\mathcal{V}$. Without loss of generality, assume that $x_{i}$ belongs to $\mathcal{U}$.
Suppose that $x_{i+b}\not\in\mathcal{U}$, for contradiction. Then, $\mathcal{U}$ must contain exactly one of $x_{i+a}$ and $x_{i+a+b}$. Suppose we have $x_{i+a+b}\in\mathcal{U}$. Consequently, $x_{i+a}$ and $x_{i+b}$ belong to $\mathcal{V}$. Without loss of generality, let $x_{i}=u_{1}$. Then, $x_{i+a}$ and $x_{i+b}$ are equal to $v_{1}$ and $v_{3}$. This means that there exists a vertex in $\mathcal{U}-\{u_{1}\}$ which is adjacent to both $v_{1}$ and $v_{3}$. However, this is a contradiction, because when $n\geq 6$, the only vertex in $A[n,2]$ which is adjacent to both $v_{1}$ and $v_{3}$ is $u_{1}$. 
Therefore, we have $x_{i+a}\in\mathcal{U}$. This means that $x_{i+b}$ and $x_{i+a+b}$ both belong to $\mathcal{V}$. Suppose further that $x_{i+2a}\in\mathcal{V}$. Since we cannot have three vertices in a $4$-cycle belonging to $\mathcal{V}$, $x_{i+2a+b}\in \mathcal{U}$. However, this once again gives rise to two vertices in $\mathcal{V}$, say $v_{\beta}$ and $v_{\beta+2}$, for some $\beta\in[n]$, which have two distinct vertices from $\mathcal{U}$ as common neighbours, a contradiction.  
Therefore, $x_{i+2a}$ must belong to $\mathcal{U}$. By repeating the same argument used for $x_{i+2a}$ to the vertices $x_{i+3a},\ldots, x_{i+(p-1)a}$, we get that all the vertices in the $p$-cycle, from $\mathcal{P}$, containing $x_{i}$, belong to $\mathcal{U}$, a contradiction. Hence, $x_{i+a}\not\in\mathcal{U}$, and consequently, neither one of $x_{i+a}$ and $x_{i+a+b}$ is in $\mathcal{U}$, contradicting our initial assumption. This implies that $x_{i+b}\in\mathcal{U}$, and that $x_{i\pm a}$ and $x_{i+b\pm a}$ belong to $\mathcal{V}$. This forces all the vertices not considered so far to satisfy the two conditions (a) and (b) in the above claim.

Thus, by Remark \ref{RemarkDrawing}, $\frac{2n}{p}=\gcd(n,k)$. This is a contradiction, since $\frac{2n}{p}$ is odd and the greatest common divisor of $n$ and $k$ is even, since both $n$ and $k$ are even. Hence, $\gcd(2n,a)=1$, and by a similar reasoning, one can deduce that $\gcd(2n,b)=1$ as well.\,\,\,{\tiny$\blacksquare$}\\

\noindent\textbf{Claim 2.} $a+b=n$. 

\noindent{\emph{Proof of Claim 2.}} Suppose that $a+b\neq n$. By Claim 1, we have that the edges of $G$ can be partitioned into two Hamiltonian cycles induced by the edges having length $a$ and $b$, respectively. This means that there exist two consecutive edges, each with endvertices belonging to $\mathcal{U}$, having unequal lengths, that is, length $a$ and length $b$. Without loss of generality, assume that $u_{n}u_{1}$ and $u_{1}u_{2}$ have lengths $b$ and $a$, respectively. Consider the $4$-cycle $C=(u_{1},u_{2},v_{2},v_{1})$. Since the edges having length $a$ (and similarly the edges having length $b$) induce a Hamiltonian cycle, and $n>4$, the lengths of the edges in $C$ cannot all be the same. Moreover, if $C$ contains exactly two edges having length $a$, then these two edges cannot be consecutive. For, suppose not. This implies that $2a\equiv\pm 2b\pmod{2n}$. Since $a$ and $b$ are both positive and distinct, $2a\equiv -2b\pmod{2n}$, and consequently, $a+b=n$, a contradiction to our initial assumption. By a similar reasoning, if a $4$-cycle in $G$ has exactly two edges of length $b$ they cannot be consecutive. In the sequel, when a $4$-cycle in $G$ has exactly two edges having length $a$ (similarly, length $b$) and these two edges are consecutive, we shall say that this $4$-cycle has the forbidden configuration.

Since $C$ cannot have the forbidden configuration, the lengths of the edges $(u_{1}u_{2}, u_{2}v_{2}, \linebreak v_{2}v_{1}, v_{1}u_{1})$ of $C$ can be of Type A1 $:=(a,b,b,b)$, Type A2 $:=(a,a,b,a)$, Type A3 $:=(a,a,a,b)$, Type A4 $:=(a,b,a,a)$, or Type A5 $:=(a,b,a,b)$, which are depicted in Figure \ref{FigureTypes}. We remark that any other 4-cycle in $G$ must admit exactly 3 edges of length $a$ (or $b$), or be similar to Type A5, that is, has exactly two edges of length $a$ which are not consecutive.

\begin{figure}[h]
\centering
\includegraphics[width=1\textwidth, keepaspectratio]{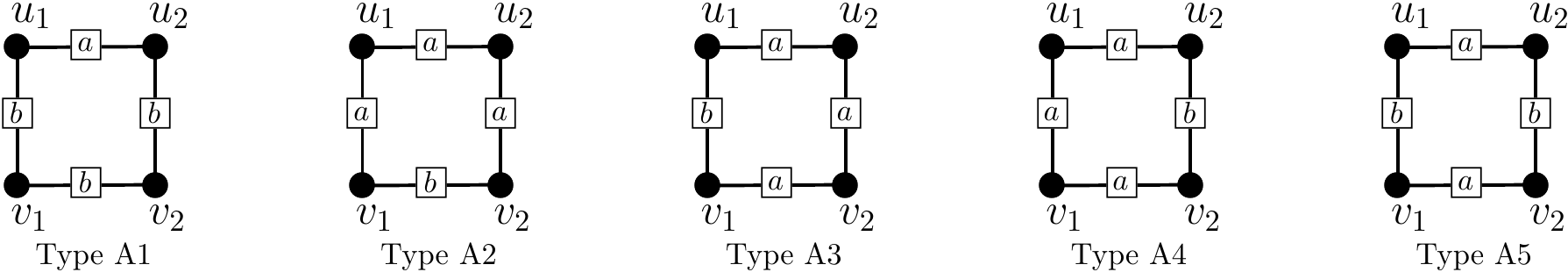}
\caption{Different lengths of the edges in $C$}
\label{FigureTypes}
\end{figure} 

Suppose that $C$ is of Type A1. Then, $u_{1}v_{3}$ and $v_{2}v_{3}$ both have length $a$, implying that the 4-cycle $(u_{1},v_{3},v_{2},v_{1})$ has the forbidden configuration, a contradiction. Therefore, $C$ cannot be of Type A1. Suppose that $C$ is of Type A2. Consequently, $u_{1}v_{3}$ has length $b$. Since the $4$-cycle $(u_{1},v_{3},v_{2},u_{2})$ cannot have the forbidden configuration, $v_{2}v_{3}$ must be equal to $a$. However, this means that $u_{n}v_{2}$ has length $b$, and so, the $4$-cycle $(u_{n},u_{1},u_{2},v_{2})$ has the forbidden configuration, a contradiction. Therefore, $C$ cannot be of Type A2 either. If $C$ is of Type A3, then the edge $u_{n}v_{2}$ is of length $b$, implying that the $4$-cycle $(u_{n},u_{1},u_{2},v_{2})$ has the forbidden configuration, a contradiction once again, implying that $C$ cannot be of Type A3.
Hence, suppose that $C$ is of Type A4. Since the edge $u_{1}v_{3}$ has length $b$, the edge $v_{2}v_{3}$ must have length $a$, because otherwise, the 4-cycle $(u_{1},v_{3},v_{2},v_{1})$ has the forbidden configuration. However, this means that the edge $u_{n}v_{2}$ has length $b$, implying that the 4-cycle $(u_{n},u_{1},v_{1},v_{2})$ has the forbidden configuration. Therefore, $C$ must be of Type A5, and consequently, $u_{1}v_{3}$ has length $a$. The edge $v_{2}v_{3}$ cannot have length $b$, because otherwise, the $4$-cycle $(u_{1},u_{2},v_{2},v_{3})$ has the forbidden configuration. Therefore, $v_{2}v_{3}$ has length $a$, implying that the $4$-cycle $(u_{n},u_{1},v_{3},v_{2})$ has the forbidden configuration, a contradiction once again. Thus, $C$ cannot be of Type A5, and so, $a+b=n$, as required. \,\,\,{\tiny$\blacksquare$}\\

($\Leftarrow$) Assume that $\gcd(2n,a)=\gcd(2n,b)=1$ and $a+b=n$. We already know from Lemma 5.2 in \cite{accordionspmh} that $A[n,2]\simeq\textrm{Ci}[2n,\{1,n-1\}]$. So it suffices to show that when $a$ and $b$ are not equal to $1$ and $n-1$, respectively, $\textrm{Ci}[2n,\{a,b\}]\simeq\textrm{Ci}[2n,\{1,n-1\}]$. Let $V(\textrm{Ci}[2n,\{1,n-1\}])=\{x_{1},\ldots, x_{2n}\}$ and let $V(\textrm{Ci}[2n,\{a,b\}])=\{x_{1}',\ldots, x_{2n}'\}$. We claim that the function $\mu:V(\textrm{Ci}[2n,\{1,n-1\}])\rightarrow V(\textrm{Ci}[2n,\{a,b\}])$ defined by $\mu:x_{i}\mapsto x_{ia}$ for all $1\leq i \leq 2n$, is an isomorphism. Since $\gcd(2n,a)=1$, the Hamiltonian cycle $(x_{1},x_{2},\ldots, x_{2n})$ of $\textrm{Ci}[2n,\{1,n-1\}]$ is mapped to the Hamiltonian cycle $(x_{a}',x_{2a}',\ldots,x_{2na}')$ of $\textrm{Ci}[2n,\{a,b\}]$. Consequently, $\mu$ is bijective, and the edges having length $1$ in $\textrm{Ci}[2n,\{1,n-1\}]$ are mapped to the edges having length $a$ in $\textrm{Ci}[2n,\{a,b\}])$. What is left to show is that an edge having length $n-1$ in $\textrm{Ci}[2n,\{1,n-1\}]$ is mapped to an edge of length $b$ in $\textrm{Ci}[2n,\{a,b\}]$. Let $j\in[2n]$ and consider the edge $x_{j}x_{j+n-1}$ from $\textrm{Ci}[2n,\{1,n-1\}]$. By definition of $\mu$, $\mu(x_{j})=x_{ja}'$ and $\mu(x_{j+n-1})=x_{(j+n-1)a}'$. Since $\gcd(2n,a)=1$, $a$ is odd. Moreover, since $a+b=n$, we have $(j+n-1)a\equiv ja+n-a\equiv ja+b\pmod{2n}$. Therefore, $\mu(x_{j})\mu(x_{j+n-1})\in E(\textrm{Ci}[2n,\{a,b\}])$, and consequently, edges having length $n-1$ in $\textrm{Ci}[2n,\{1,n-1\}]$ are mapped to edges having length $b$ in $\textrm{Ci}[2n,\{a,b\}]$. Since $\textrm{Ci}[2n,\{1,n-1\}]$ and $\textrm{Ci}[2n,\{a,b\}]$ have the same number of edges, $\textrm{Ci}[2n,\{a,b\}]$ is isomorphic to $\textrm{Ci}[2n,\{1,n-1\}]$, as required.
\end{proof}

\subsubsection{Non-bipartite circulants which are accordion graphs}

Before continuing, we remark that by Proposition 1 in \cite{boeschtindell}, the quartic circulant graph $\textrm{Ci}[2n,\{a,b\}]$ is connected if and only if $\gcd(2n,a,b)=1$. Consequently, if $\textrm{Ci}[2n,\{a,b\}]$ is connected, $a$ and $b$ cannot both be even, because otherwise, $\gcd(2n,a,b)\geq 2$. Thus, for the non-bipartite case, $a$ and $b$ need to have different parity. In the next theorem, without loss of generality, we shall assume that $a$ is odd and $b$ is even.

\begin{theorem}\label{theorem nonbip iso}
Let $a$ be an odd integer and $b$ be an even integer. Then, $\textrm{Ci}[2n,\{a,b\}]\simeq A[n,k]$ if and only if there exists $1\leq k\leq\frac{n}{2}$ such that the following three conditions hold:
\begin{enumerate}[(i)]
\item $k$ is odd when $n$ is even;
\item $\gcd(2n,a)=\gcd(n,k)$; and 
\item $b\gcd(n,k)\equiv \pm2\lambda a\pmod{2n}$, where $\lambda$ is the smallest integer such that $\lambda k\equiv \gcd(n,k)\pmod{n}$. 
\end{enumerate}
\end{theorem}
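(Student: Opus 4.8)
The plan is to mimic the structure of the proof of Theorem~\ref{theorem bip iso}, working now in the non-bipartite case where $a$ is odd and $b$ is even. The first step is to pin down the rôle of $\gcd(n,k)$ on the accordion side. By Remark~\ref{RemarkDrawing}, deleting a sparse set of cycle edges from $A[n,k]$ yields $C_{n_1}\square P_{n_2}$ with $n_2=\gcd(n,k)$ and $n_1 n_2=2n$. On the circulant side, the edges of length $a$ partition $V(\textrm{Ci}[2n,\{a,b\}])$ into $\gcd(2n,a)$ cycles, each of length $2n/\gcd(2n,a)$; since $a$ is odd, $2n/\gcd(2n,a)$ is even, so the circulant is an ``even'' analogue of the structure appearing in the bipartite proof. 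I would first handle the parity condition (i): by Lemma~\ref{lemma bipartite}, $A[n,k]$ non-bipartite with $n$ even forces $k$ odd, and this is exactly condition (i); when $n$ is odd there is no restriction, matching the theorem. Then, for the forward direction, I would show that under an isomorphism $\textrm{Ci}[2n,\{a,b\}]\simeq A[n,k]$ the $a$-length edges must be ``compatible'' with the $\mathcal{U}/\mathcal{V}$ bipartition-of-cycles structure of $A[n,k]$ coming from Remark~\ref{RemarkDrawing}, forcing the number $\gcd(2n,a)$ of $a$-cycles to equal $n_2=\gcd(n,k)$, which is condition (ii). This is the analogue of Claim~1 in the proof of Theorem~\ref{theorem bip iso}, but one must be careful because here $\gcd(2n,a)$ need not equal $1$ and the $a$-cycles need not be Hamiltonian.

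Next I would extract condition (iii), the diagonal-spoke condition. Having identified the $a$-cycles of $\textrm{Ci}[2n,\{a,b\}]$ with the ``vertical-slice'' cycles of $C_{n_1}\square P_{n_2}$ sitting inside $A[n,k]$, the $b$-length edges must play the rôle of the remaining edges of $A[n,k]$, namely the edges joining consecutive copies of $P_{n_2}$ together with the added diagonal/vertical spokes. Tracking how a $b$-edge shifts one from one $a$-cycle to the next, and how this shift compares with the shift $u_i\mapsto v_{i+k}$ along the diagonal spokes of $A[n,k]$, should yield a congruence of the form $b\cdot n_2 \equiv \pm 2\lambda a \pmod{2n}$, where $\lambda$ is the least positive integer with $\lambda k\equiv n_2\pmod n$ (so that $\lambda$ steps along the diagonal-spoke direction advance the index by exactly $n_2$, i.e.\ by one ``$P_{n_2}$-slice''); the $\pm$ and the factor $2$ come, as in the proof of Theorem~\ref{theorem iso accordions} and in Remark~\ref{RemarkDrawing}'s ``$2\gamma$'', from the two possible orientations when re-labelling the two interleaved cycles. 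For the converse direction, I would reverse this: given $1\le k\le \frac n2$ satisfying (i)--(iii), explicitly construct a bijection $V(A[n,k])\to V(\textrm{Ci}[2n,\{a,b\}])$ — most naturally the map $x\mapsto x_{\varphi(x)}$ where $\varphi$ walks along the $a$-cycles exactly as one walks along the $P_{n_2}$-slices of $A[n,k]$ — and verify directly, using (ii) to get bijectivity (the $a$-edges cover everything correctly) and (iii) to get that each spoke of $A[n,k]$ lands on a $b$-edge. This is the analogue of the explicit isomorphism $\mu:x_i\mapsto x_{ia}$ at the end of the proof of Theorem~\ref{theorem bip iso}.

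The main obstacle I expect is the forward direction's bookkeeping: unlike the bipartite case, where $\gcd(2n,a)=\gcd(2n,b)=1$ made both length classes into single Hamiltonian cycles, here only one length class ($b$, even) need correspond to the long cycles, while the $a$-class splits into $\gcd(2n,a)$ shorter cycles, and a priori one does not know which length plays which rôle inside $A[n,k]$ — one must rule out, for instance, the $b$-edges being the ones forming the short cycles. Resolving this should again hinge on a forbidden-configuration / short-cycle analysis: the diagonal and vertical spokes of $A[n,k]$, together with the cycle edges, create $4$-cycles and short cycles whose lengths (in the circulant's length-labelling) are tightly constrained, so that only the assignment ``$a\leftrightarrow$ slice-cycles, $b\leftrightarrow$ spokes-and-connectors'' survives, and then one reads off $\gcd(2n,a)=\gcd(n,k)$ and the congruence (iii). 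A secondary subtlety is the normalisation $1\le k\le\frac n2$: one must check that the $\pm$ in (iii) correctly absorbs the ambiguity $k\leftrightarrow n-k$ and the choice of orientation of the two interleaved cycles, exactly as the natural automorphism $\psi$ was used in the proof of Theorem~\ref{theorem iso accordions}.
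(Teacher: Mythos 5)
Your plan follows essentially the same route as the paper's proof: condition (i) from Lemma~\ref{lemma bipartite}, condition (ii) by matching the $\gcd(2n,a)$ cycles of length-$a$ edges with the $C_{p}\square P_{\gcd(n,k)}$ spanning structure of Remark~\ref{RemarkDrawing} via a $4$-cycle/forbidden-configuration analysis of how these cycles interleave $\mathcal{U}$ and $\mathcal{V}$, condition (iii) by comparing the length-$b$ path and the length-$a$ path between $v_1$ and $v_{\gcd(n,k)+1}$ (giving $b\gcd(n,k)\equiv\pm2\lambda a$), and the converse by an explicit cycle-to-cycle bijection. The only ingredient your outline leaves implicit, which the paper needs to close Claim~A, is the appeal to Theorem~\ref{theorem iso accordions} to conclude that the parameter $k'$ read off from the reconstructed accordion structure actually equals the given $k$.
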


\begin{proof}

When $n=3$, the only possible value for $k$ is $1$. From Corollary 5.5 in \cite{accordionspmh}, we already know that $A[3,1]\simeq \textrm{Ci}[6,\{1,2\}]$, and since $1$ and $2$ are the only two distinct integers (with different parity) in $\{1,2\}$ satisfying conditions (ii) and (iii) in the statement of Theorem \ref{theorem nonbip iso}, the result holds for $n=3$. Thus, in what follows we can assume that $n\geq 4$.\\

($\Rightarrow$) Since $a$ and $b$ have different parity, $\textrm{Ci}[2n,{a,b}]$ is not bipartite. Thus, by Lemma \ref{lemma bipartite}, if $n$ is even then $k$ must be odd. This proves (i). As in Theorem \ref{theorem bip iso}, we denote $\textrm{Ci}[2n,\{a,b\}]$, or equivalently $A[n,k]$, by $G$.\\

\noindent\textbf{Claim A.} $\gcd(2n,a)=\gcd(n,k)$.
 
\noindent Let $\gcd(2n,a)=q$ and suppose that $q\neq\gcd(n,k)$, for contradiction. Since $a$ is odd, $q$ is an odd number as it is the greatest common divisor of two integers having distinct parity. The least common multiple of $2n$ and $a$ is equal to $2na'$, for some $a'\leq a$, and consequently, there exists an even integer $p$ (equal to $\frac{2n}{q}$) such that $ap=2na'$. By considering the edges in $G$ having length $a$, there exists a partition $\mathcal{P}$ of the $2n$ vertices of $G$ into $\frac{2n}{p}$ sets, each inducing a $p$-cycle. Moreover, $\mathcal{P}$ has an odd number of components, namely $q$, or equivalently $\frac{2n}{p}$. We consider two cases depending on the number of components of $\mathcal{P}$, that is, when $q=1$ and when $q\geq 3$.\\

\noindent\textbf{Case 1.} $q=1$.

\noindent Since $q=\gcd(2n,a)$, the edges having length $a$ induce a Hamiltonian cycle of $G$. When $n=4$, since $G$ is not bipartite, $k$ must be equal to $1$. However, $\gcd(4,1)=1$, and so, since we are assuming that $\gcd(n,k)\neq q$, we can assume that $n>4$. Consider a $4$-cycle in $G$. Since $n>4$, we cannot have all the edges of a $4$-cycle having length $a$. Moreover, since $a$ is odd and $b$ is even, we cannot have $a\equiv\pm3b\pmod{2n}$ or $b\equiv\pm3a\pmod{2n}$. Therefore, the length of the consecutive edges of a $4$-cycle in $G$ must be $(a,a,b,b)$ or $(a,b,a,b)$, as in Figure \ref{FigureTypes2}.

\begin{figure}[h]
\centering
\includegraphics[width=0.35\textwidth, keepaspectratio]{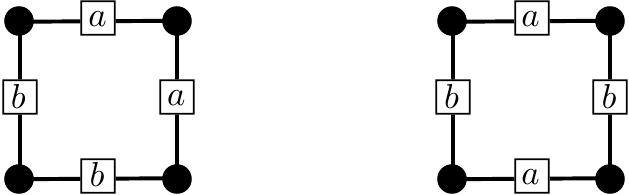}
\caption{Edge lengths of $4$-cycles in Case 1 of Theorem \ref{theorem nonbip iso}}
\label{FigureTypes2}
\end{figure}
In particular, since the edges having length $a$ induce a Hamiltonian cycle of $G$, there must exist a $4$-cycle $(u_{i}, u_{i+1},v_{i+1},v_{i})$, for some $i\in[n]$, in $G$, such that its consecutive edges have lengths $(a,a,b,b)$. Consequently, $2a\equiv\pm 2b\pmod{2n}$, and since $a\neq b$, we have $2a\equiv -2b\pmod{2n}$. As a result, $a+b=n$, and so, since $a$ is odd and $b$ is even, $n$ is odd. We claim that, in this case, $\gcd(2n,b)=2$. In fact, since $2n=2(n-a)+2a$, $\gcd(2n,b)=\gcd(2n,n-a)=\gcd(2n,2a)=2\gcd(n,a)$. Moreover, since $a$ is odd, $\gcd(2n,a)=\gcd(n,a)$, giving $\gcd(2n,b)=2$, since $\gcd(2n,a)=1$. Consequently, by Remark \ref{RemarkDrawing}, $G$ is isomorphic to some accordion graph $A[n,k_{1}]$, with $\gcd(n,k_{1})=1$, in which the spokes form a Hamiltonian cycle, and the cycle edges have length $b$. By Theorem \ref{theorem iso accordions}, $k=k_{1}$, implying that $\gcd(n,k)$ is equal to $1$, contradicting our initial assumption that $\gcd(n,k)\neq q$. Therefore, $\gcd(2n,a)=\gcd(n,k)$.\\

\noindent\textbf{Case 2.} $q\geq 3$.

\noindent Since $G$ is a connected quartic graph and $q>1$, two vertices on a particular $p$-cycle in $\mathcal{P}$ are adjacent in $G$ if and only if there is an edge of length $a$ between them, in other words, the subgraph induced by the vertices on a $p$-cycle in $\mathcal{P}$ is the $p$-cycle itself. 
Therefore, the graph contains two adjacent vertices $x_{i}$ and $x_{j}$ belonging to two different $p$-cycles from $\mathcal{P}$, with the edge $x_{i}x_{j}$ having length $b$. Consequently, since $i\equiv j\pm b\pmod{2n}$, the vertices of these two $p$-cycles induce $C_{p}\square P_{2}$, where $P_{2}$ is the path on two vertices. By a similar argument to that used on $x_{i}$ and $x_{j}$, we deduce that $G$ contains a spanning subgraph $G_{0}$ isomorphic to $C_{p}\square P_{q}$.
As in Theorem \ref{theorem bip iso} we next claim that: 
\begin{enumerate}[(a)]
\item given two adjacent vertices from some $p$-cycle in $\mathcal{P}$, say $x_{i}$ and $x_{i+a}$, if $x_{i}$ is a vertex in $\mathcal{U}$, then $x_{i+a}$ is a vertex in $\mathcal{V}$, or vice-versa; and
\item given two adjacent vertices from two different $p$-cycles in $\mathcal{P}$, say $x_{i}$ and $x_{i+b}$, we have that either both belong to $\mathcal{U}$ or both belong to $\mathcal{V}$.
\end{enumerate}
First of all, we note that the vertices inducing a $p$-cycle from $\mathcal{P}$, cannot all belong to $\mathcal{U}$, since the latter set of vertices induces a $n$-cycle, and $p<n$. Similarly, the vertices inducing a $p$-cycle from $\mathcal{P}$, cannot all belong to $\mathcal{V}$. Secondly, let $i\in[2n]$, such that $x_{i}$ is of degree $3$ in $G_{0}$, and $x_{i}x_{i+b}\in E(G_{0})$. Consider the $4$-cycle $(x_{i}, x_{i+a},x_{i+a+b}, x_{i+b})$. If these four vertices all belong to $\mathcal{U}$ (or $\mathcal{V}$), then $n=4$. However, there is no odd integer $a$ such that $\gcd(8,a)=q>1$, and so we can assume that these four vertices cannot all belong to $\mathcal{U}$ (or $\mathcal{V}$).
Therefore, suppose that exactly three of them belong to $\mathcal{U}$ (or $\mathcal{V}$). Without loss of generality, assume that $x_{i}=u_{1}$, $x_{i+a}=u_{2}$, and $x_{i+a+b}=u_{3}$. We consider two cases, depending whether $x_{i+b}$ is equal to $v_{1}$ or $v_{3}$.\\

\noindent\textbf{Case 2.1} $x_{i+b}=v_{1}$. 

\noindent Since $u_{3}$ is adjacent to $v_{1}$, we have that $k=n-2$, which by Definition \ref{def AccordionGraphs} is at most $\frac{n}{2}$. This implies that $n\leq 4$, and since we are assuming that $n>3$, we must have $n=4$, and $k=1$ or $2$. Since $G$ is not bipartite, $k=1$. In this case, the vertex $x_{i+2a}$ is equal to $v_{2}$ or $v_{3}$. However, this means that $x_{i+2a}$ is adjacent to $x_{i+b}$ or $x_{i+a+b}$, a contradiction, since $G_{0}\simeq C_{p}\square P_{q}$ and $q\geq 3$.\\

\noindent\textbf{Case 2.2} $x_{i+b}=v_{3}$.

\noindent Since $u_{1}$ is adjacent to $v_{3}$, the value of $k$ is $2$. Consequently, $x_{i+2a}$ is equal to $v_{2}$ or $v_{4}$, and in both cases $x_{i+2a}$ is adjacent to $x_{i+b}$, a contradiction once again, as in Case 2.1.\\

We remark that by the same reasoning used in Case 2.1 and Case 2.2 above, no $4$-cycle in $G_{0}$ has exactly three of its vertices belonging to $\mathcal{U}$ (or $\mathcal{V}$). This means that exactly two vertices from $(x_{i}, x_{i+a},x_{i+a+b}, x_{i+b})$ belong to $\mathcal{U}$, and the other two belong to $\mathcal{V}$.  Without loss of generality, assume that $x_{i}$ belongs to $\mathcal{U}$.
Suppose that $x_{i+b}\not\in\mathcal{U}$, for contradiction. Then, $\mathcal{U}$ must contain exactly one of $x_{i+a}$ and $x_{i+a+b}$. Suppose we have $x_{i+a+b}\in\mathcal{U}$. Consequently, $x_{i+a}$ and $x_{i+b}$ belong to $\mathcal{V}$, and so since $q\geq 3$, $x_{i+2a+b}$ and $x_{i+a+2b}$ belong to $\mathcal{U}$. However, this means that the $4$-cycle $(x_{i+a+b},x_{i+a+2b},x_{i+2a+2b},x_{i+2a+b})$ in $G_{0}$ has at least three of its vertices belonging to $\mathcal{U}$, a contradiction. 
Therefore, we have $x_{i+a}\in\mathcal{U}$. This means that $x_{i+b}$ and $x_{i+a+b}$ both belong to $\mathcal{V}$. Suppose further that $x_{i+2a}\in\mathcal{V}$. Since the 4-cycle $(x_{i+a}, x_{i+a+b}, x_{i+2a+b},x_{i+2a},)$ cannot have three vertices belonging to $\mathcal{V}$, $x_{i+2a+b}\in \mathcal{U}$. However, since $x_{i+a+b}$ and $x_{i+2a}$ belong to $\mathcal{V}$, the $4$-cycle $(x_{i+2a+b},x_{i+2a+2b},x_{i+3a+2b}, x_{i+3a+b})$ in $G_{0}$ has at least three of its vertices belonging to $\mathcal{U}$, a contradiction. Therefore, $x_{i+2a}$ must belong to $\mathcal{U}$. By repeating the same argument used for $x_{i+2a}$ to the vertices $x_{i+3a},\ldots, x_{i+(p-1)a}$, we get that all the vertices in the $p$-cycle (from $\mathcal{P}$) containing $x_{i}$, belong to $\mathcal{U}$, a contradiction. Hence, $x_{i+a}\not\in\mathcal{U}$. Thus, $x_{i+a}$ and $x_{i+a+b}$ do not belong to $\mathcal{U}$, contradicting our initial assumption. This implies that $x_{i+b}\in\mathcal{U}$, and that $x_{i\pm a}$ and $x_{i+b\pm a}$ belong to $\mathcal{V}$. This forces all the vertices not considered so far to satisfy the two conditions in the above claim.
Consequently, by Remark \ref{RemarkDrawing}, $G$ is isomorphic to some accordion graph $A[n,k_{2}]$, with $\gcd(n,k_{2})=q\geq 3$. By Theorem \ref{theorem iso accordions}, $k=k_{2}$, implying that $\gcd(n,k)$ is equal to $q$, contradicting our initial assumption that $\gcd(n,k)\neq q$. This proves (ii) in the statement of Theorem \ref{theorem nonbip iso}.\,\,\,{\tiny$\blacksquare$}\\

\noindent\textbf{Claim B.} $b\gcd(n,k)\equiv \pm2\lambda a\pmod{2n}$, where $\lambda$ is the smallest integer such that $\lambda k\equiv \gcd(n,k)\pmod{n}$.
 
\noindent Let $q=\gcd(n,k)$ and let $p'=\frac{p}{2}$, that is, $p'=\frac{n}{q}$. For every $i\in[q]$, let 
\begin{linenomath}$$W_{i}=\{v_{i},u_{i}, v_{i+k}, u_{i+k},\ldots, v_{i+(p'-1)k},u_{i+(p'-1)k}\}.$$
\end{linenomath}
By Remark \ref{RemarkDrawing} and Claim A, the sets $W_{1},\ldots, W_{q}$ are the $q$ vertex sets of the $p$-cycles in the partition $\mathcal{P}$, induced by the edges of length $a$ in $G$. Since for every $i\in[n]$, the length of the edges $u_{i}u_{i+1}$ and $v_{i}v_{i+1}$ is not $a$, the $n$-cycles $(u_{1},\ldots,u_{n})$ and $(v_{1},\ldots,v_{n})$ must be generated by edges having length $b$. 
Next, we define the $[v_{1},v_{q+1}]_b$-path to be the path containing $v_{2}$, having $v_{1}$ and $v_{q+1}$ as endvertices, and whose edges are all of length $b$. Without loss of generality, assume that $v_{1}$ and $v_{2}$ are respectively equal to $x_{j}$ and $x_{j+b}$, for some $j\in[2n]$. Consequently, $x_{j+bq}=v_{q+1}$, and we shall say that the length of this $[v_{1},v_{q+1}]_b$-path is the sum of all the edge lengths on this path. In this case, the length of the $[v_{1},v_{q+1}]_b$-path is $bq$.  By Remark \ref{RemarkDrawing}, both $v_{1}$ and $v_{q+1}$ belong to $W_{1}$, that is, to the same $p$-cycle in $\mathcal{P}$. Similarly, we define the $[v_{1},v_{q+1}]_a$-path to be the path containing $u_{1}$, having $v_{1}$ and $v_{q+1}$ as endvertices, and whose edges are all of length $a$. The length of the $[v_{1},v_{q+1}]_a$-path is $2a$ multiplied by $\lambda$, where $\lambda$ is the least integer such that $\lambda k\equiv q\pmod{n}$. As a consequence, since $v_{1}=x_{j}$, we have $v_{q+1}=x_{j\pm 2a\lambda}$. The reason behind the $\pm$ sign preceding $2a$ is because we have already assumed that $v_{1}$ and $v_{2}$ are equal to $x_{j}$ and $x_{j+b}$, respectively, and so now we need to take into consideration that $u_{1}$ can be equal to $x_{j+a}$ or $x_{j-a}$. Furthermore, since $v_{q+1}$ is also equal to $x_{j+bq}$ and $q=\gcd(n,k)$, $b\gcd(n,k)\equiv \pm2\lambda a\pmod{2n}$, proving (iii).\,\,\,{\tiny$\blacksquare$}\\

($\Leftarrow$) Assume there exists $1\leq k\leq \frac{n}{2}$ satisfying (i), (ii), (iii) in the statement of Theorem \ref{theorem nonbip iso} and let $\gcd(n,k)=q=\frac{2n}{p}$, for some integers $p$ and $q$. Since $\gcd(2n,a)=\gcd(n,k)$, both $\textrm{Ci}[2n,\{a,b\}]$ and $A[n,k]$ contain $C_{p}\square P_{q}$ as a spanning subgraph. We note that since $n$ and $k$ do not have the same parity, $q$ is odd and $p$ is even. Let $G_{\textrm{\tiny A}}$ and $G_{\textrm{\tiny Ci}}$ be the copy of a spanning subgraph $C_{p}\square P_{q}$ in $A[n,k]$ and $\textrm{Ci}[2n,\{a,b\}]$, respectively. Consider the graph $G_{\textrm{\tiny Ci}}$. Since (quartic) circulant graphs are vertex-transitive, without loss of generality, we can assume that the vertex $x_{a+b}$ has degree $3$ in $G_{\textrm{\tiny Ci}}$. We first consider the case when $bq\equiv 2\lambda a\pmod{2n}$. For every $i\in [q]$ let
\begin{linenomath}$$X_{i}=\{x_{a+ib},x_{2a+ib},\ldots,x_{pa+ib}\}.$$
\end{linenomath}
It is clear that each $X_{i}$ induces a $p$-cycle, and that each $p$-cycle is induced by edges of $\textrm{Ci}[2n,\{a,b\}]$ having length $a$. 
The $X_{i}$s are in fact the $q$ canonical copies of $C_{p}$ in $G_{\textrm{\tiny Ci}}$.
Next, consider the graph $G_{\textrm{\tiny A}}$. By Remark \ref{RemarkDrawing}, we can assume that $v_{1}$ has degree $3$ in $G_{\textrm{\tiny A}}$. For every $i\in [q]$ let
\begin{linenomath}$$W_{i}=\{v_{i},u_{i}, v_{i+k}, u_{i+k},\ldots, v_{i+(p'-1)k},u_{i+(p'-1)k}\},$$
\end{linenomath}
where $p'=\frac{p}{2}$. It is clear that each $W_{i}$ induces a $p$-cycle, and the $W_{i}$s are in fact the $q$ canonical copies of $C_{p}$ in $G_{\textrm{\tiny A}}$. Consider the function $\phi:V(\textrm{Ci}[2n,\{a,b\}])\rightarrow V(A[n,k])$ such that, for every $i\in[q]$, $\phi$ maps the consecutive vertices of the $p$-cycle induced by $X_{i}$ to the consecutive vertices of the $p$-cycle induced by $W_{i}$, such that $\phi(x_{a+ib})=v_{i}$ and $\phi(x_{2a+ib})=u_{i}$. Since $\phi$ induces an isomorphism between $G_{\textrm{\tiny Ci}}$ and $G_{\textrm{\tiny A}}$, $\phi$ is clearly bijective. Moreover, since $v_{q}v_{q+1}\in A[n,k]$, in order to show that $\phi$ is an isomorphism between $\textrm{Ci}[2n,\{a,b\}]$ and $A[n,k]$, it suffices to show that $\phi^{-1}(v_{q})$ and $\phi^{-1}(v_{q+1})$ are adjacent in $\textrm{Ci}[2n,\{a,b\}])$. By definition of $X_{q}$, $\phi^{-1}(v_{q})=x_{a+qb}$, where the latter is of degree 3 in $G_{\textrm{\tiny Ci}}$. The vertex $x_{a+b+qb}$ is the vertex in $X_{1}$ which is adjacent to $x_{a+qb}$ in $\textrm{Ci}[2n,\{a,b\}]$. We claim that $\phi(x_{a+b+qb})=v_{q+1}$. Since $bq\equiv 2\lambda a\pmod{2n}$, $x_{a+b+qb}=x_{a+b+2\lambda a}$, and by definition of $X_{1}$ and $W_{1}$, $\phi(x_{a+b+2\lambda a})=v_{\gamma}$, for some $\gamma\in[n]$. Since $\phi(x_{a+b})=v_{1}$ and $\lambda$ is the smallest integer such that $\lambda k\equiv q\pmod{n}$, $\gamma\equiv 1+\lambda k\equiv 1+q\pmod{n}$, and so $v_{\gamma}=v_{1+q}$, as required. 

Finally, let's consider the case when $bq\equiv -2\lambda a\pmod{2n}$. For every $i\in[q]$, we let
\begin{linenomath}$$X_{i}=\{x_{a+ib},x_{ib},\ldots,x_{(2-p)a+ib}\},$$
\end{linenomath}
and \begin{linenomath}$$W_{i}=\{v_{i},u_{i}, v_{i+k}, u_{i+k},\ldots, v_{i+(p'-1)k},u_{i+(p'-1)k}\},$$
\end{linenomath}
where once again, $p'=\frac{p}{2}$. As in the case when $bq\equiv 2\lambda a\pmod{2n}$, the function $\Phi:V(\textrm{Ci}[2n,\{a,b\}])\rightarrow V(A[n,k])$ which, for every $i\in[q]$, maps the consecutive vertices of the $p$-cycle induced by $X_{i}$ to the consecutive vertices of the $p$-cycle induced by $W_{i}$, such that $\Phi(x_{a+ib})=v_{i}$ and $\Phi(x_{ib})=u_{i}$, is an isomorphism. In particular, $\Phi(x_{a+qb})=v_{q}$ and $\Phi(x_{a+(q+1)b})=v_{q+1}$, because $\Phi(x_{a+(q+1)b})=\Phi(x_{a+b+qb})=\Phi(x_{a+b-2\lambda a})=v_{1+\lambda k}=v_{1+q}$, since  $\lambda$ is the smallest integer such that $\lambda k\equiv q\pmod{n}$. 
\end{proof}

\end{document}